\documentclass[aihp]{imsart}

\RequirePackage{amsthm,amsmath,amsfonts,amssymb}
\RequirePackage[numbers]{natbib}
\RequirePackage[colorlinks,citecolor=blue,urlcolor=blue]{hyperref}
\usepackage{nicefrac}
\usepackage{bbm}
\usepackage{xcolor}
\usepackage{comment}

\startlocaldefs

\newcommand{\Mult}{\text{\rm Mult}}

\newcommand{\be}{{\boldsymbol e}}
\newcommand{\bo}{{\boldsymbol O}}
\newcommand{\bp}{{\boldsymbol p}}
\newcommand{\bq}{{\boldsymbol q}}

\newcommand{\bu}{{\boldsymbol u}}

\newcommand{\bV}{{\boldsymbol V}}

\newcommand{\bW}{{\boldsymbol W}}
\newcommand{\bx}{{\boldsymbol X}}
\newcommand{\bxtilde}{\widetilde\bx}
\newcommand{\by}{{\boldsymbol Y}}

\newcommand{\bpi}{{\boldsymbol \pi}}
\newcommand{\bgamma}{{\boldsymbol \gamma}}
\newcommand{\btheta}{{\boldsymbol \theta}}
\newcommand{\bxi}{{\boldsymbol \xi}}
\newcommand{\bfeta}{{\boldsymbol \eta}}
\newcommand{\bone}{{\boldsymbol 1}}
\newcommand{\bzero}{{\boldsymbol 0}}

\newcommand{\dd}{{\rm d}}

\newcommand{\R}{\mathbb {R}}
\newcommand{\E}{\mathbb{E}}
\newcommand{\p}{\mathbb{P}} 
\newcommand{\1}{\mathbbm{1}} 

\newcommand{\B}{{\mathcal B}}
\newcommand{\C}{{\mathcal C}}

\newcommand{\F}{{\mathcal F}}
\newcommand{\ch}{{\mathcal H}}
\newcommand{\J}{{\mathcal J}}
\newcommand{\N}{{\mathcal N}}

\newcommand{\cq}{\mathcal{Q}}
\newcommand{\rr}{\mathcal{R}}
\newcommand{\cs}{\mathcal{S}}
\newcommand{\ct}{\mathcal{T}}

\newcommand{\ls}{\mathsf{L}}
\newcommand{\ps}{\mathsf{p}}

\newcommand{\z}{\mathfrak{z}}
\newcommand{\bz}{\boldsymbol{\z}}

\newcommand{\cond}{\,\Big\vert\, \btheta}
\newcommand{\law}[1]{\text{Law}\left(#1\right)}

\renewcommand{\leq}{\leqslant}
\renewcommand{\geq}{\geqslant}
\newcommand{\eqd}{\stackrel{d}{=}}

\numberwithin{equation}{section}

\theoremstyle{plain}
\newtheorem{Th}{Theorem}[section]
\newtheorem{Co}[Th]{Corollary}
\newtheorem{Lem}[Th]{Lemma}
\newtheorem{Prop}[Th]{Proposition}

\theoremstyle{remark}
\newtheorem{Rem}[Th]{Remark}
\newtheorem{Ex}[Th]{Example}
\newtheorem{As}[Th]{Assumption}

\endlocaldefs

\begin{document}

\begin{frontmatter}
\title{Inference via Randomized Test Statistics}
\runtitle{Inference via Randomized Test Statistics}

\begin{aug}
\author[A, B]{\inits{N.}\fnms{Nikita} \snm{Puchkin}\ead[label=e1,mark]{npuchkin@hse.ru}}
\and
\author[A, C]{\inits{V.}\fnms{Vladimir} \snm{Ulyanov}\ead[label=e2,mark]{vulyanov@cs.msu.ru}}

\address[A]{HSE University, Moscow, Russian Federation.
\printead{e1}}

\address[B]{Institute for Information Transmission Problems RAS, Moscow, Russian Federation}

\address[C]{Lomonosov Moscow State University, Moscow, Russian Federation.
\printead{e2}}

\end{aug}

\begin{abstract}
	We show that external randomization may enforce the convergence of test statistics to their limiting distributions in particular cases. This results in a sharper inference. Our approach is based on a central limit theorem for weighted sums. We apply our method to a family of rank-based test statistics and a family of phi-divergence test statistics and prove that, with overwhelming probability with respect to the external randomization, the randomized statistics converge at the rate $O(1/n)$ (up to some logarithmic factors) to the limiting chi-square distribution in Kolmogorov metric.
\end{abstract}

\begin{abstract}[language=french]
    Nous montrons que l’ajout de randomisation externe peut guider la convergence de statistiques de test vers leurs distributions limites dans certains cas particuliers. Il en r\'{e}sulte une inf\'{e}rence plus pr\'{e}cise. Notre approche est bas\'{e}e sur un th\'{e}or\`{e}me central limite pour les sommes pond\'{e}r\'{e}es. Nous appliquons notre m\'{e}thode \`{a} une famille de statistiques de test bas\'{e}es sur les rangs et \`{a} une famille de statistiques de test de phi-divergence et nous prouvons que, avec grande probabilit\'{e} par rapport \`{a} la randomisation externe, les statistiques randomis\'{e}es convergent \`{a} un taux $O(1/n)$ (n\'{e}gligeant certains facteurs logarithmiques) vers la distribution limite du chi-deux dans la distance de Kolomogorov.
\end{abstract}

\begin{keyword}[class=MSC]
\kwd{62E20}
\kwd{62H10}
\end{keyword}

\begin{keyword}
\kwd{phi-divergence test statistics}
\kwd{power divergence test statistics}
\kwd{rank-based test statistics}
\kwd{central limit theorem}
\kwd{weighted sums}
\end{keyword}

\end{frontmatter}

\section{Introduction}

The present paper studies a family of statistics commonly used in hypothesis testing. We show that external randomization may speed up the convergence rate of test statistics to their limiting distributions in particular cases. To illustrate the importance of such a refinement, consider the following example. Let $\by = (Y_1, \dots, Y_r)$ be a $r$-dimensional random vector with a multinomial distribution $\Mult(n, \bp_\by)$. Assume that a statistician is interested in testing the null hypothesis $H_0 : \bp_\by = \bp$ where $\bp = (p_1, \dots, p_r)$ is a given probability vector. He can use Pearson's criterion \citep{pearson900} to perform this task. Namely, the statistician can compute the test statistic
\[
	T_P = \sum\limits_{j=1}^r \frac{(Y_j - n p_j)^2}{n p_j}
\]
and reject $H_0$ if and only if $T_P$ exceeds a critical value. Given a significance level $\alpha \in (0, 1)$, the critical value is usually computed as the $(1 - \alpha)$-quantile of the limiting distribution of $T_P$, $\chi^2(r - 1)$.
However, since the statistician deals with a finite sample, the distribution of $T_P$ slightly differs from $\chi^2(r - 1)$, and the type-I error of the criterion may exceed $\alpha$. Fortunately, even in the worst case, it is not greater than
\[
	\alpha + d_K(\law{T_P}, \chi^2(r-1)),
\]
where $d_K$ stands for the Kolmogorov distance. For any random variables $\xi$ and $\eta$, it is defined as
\[
    d_K(\law\xi, \law\eta) = \sup\limits_{t \in \R} \left| \p\left( \xi > t \right) - \p\left( \eta > t \right) \right|.
\]
If the statistician finds a statistic converging to its limiting distribution faster than $T_P$, he will get a tighter bound on the false discovery rate and, as consequence, will perform a more accurate inference.

Instead of Pearson's statistic, one could use, for instance, the likelihood ratio, or Freeman-Tukey's \citep{ft50}, or Cressie-Read's \citep{cr84} statistics. They all belong to a large family of phi-divergence test statistics we study in the present paper (see Section 2.2 for the definition). Besides, we consider a class of statistics based on $n$ rankings \citep{sen68}. The common property of the statistics of interest is that their asymptotic distribution is $\chi^2(r-1)$. Particular examples of the statistics we consider have been broadly investigated in the literature. State-of-the-art results quantify their rate of convergence to the chi-squared distribution for finite sample size $n$. Usually, researchers use integral probability metrics (IPMs) for this purpose. An integral probability metric between distributions of random elements $\xi$ and $\eta$ is defined as
\[
    d_\ch(\law\xi, \law\eta) = \sup\limits_{h \in \ch} \left| \E h(\xi) - \E h(\eta) \right|,
\]
where $\ch$ is a given class of test functions. One often chooses $\ch$ as a class of smooth functions or thresholds
\[
    \ch_K = \left\{ h_t(x) = \1(x > t) : t \in \R \right\}.
\]
The IPM, corresponding to $\ch_K$ is nothing but the Kolmogorov distance $d_K$.

In a recent paper \citep{gaunt21}, the author has shown that if $\ch$ is a class of sufficiently smooth functions, then the IPM between a power divergence test statistic (defined in Section \ref{sec:phi-div} below) and the corresponding chi-squared distribution decays as fast as $O(1/n)$.
A similar result was obtained in \citep{gr21} for Friedman's statistic. Unfortunately, the situation is different if one deals with the Kolmogorov distance. In this case, state-of-the-art results cannot achieve the rate of convergence $O(1/n)$ with rare exceptions (see an overview in Section \ref{sec:related_work}). Instead of refining the existing bounds, we suggest modifications of the phi-divergence and rank-based test statistics, adding external randomization, which enforces the convergence to the limiting distribution. Under very mild assumptions, we show that the Kolmogorov distance between the obtained randomized statistics and a chi-squared distribution tends to zero almost as fast as $O(1/n)$ (up to logarithmic factors) with overwhelming probability over the external randomization.

Our approach is based on a central limit theorem for weighted sums. In \citep{ks12}, the authors noticed that if one takes centered i.i.d. random variables $\xi_1, \dots, \xi_n$ with unit variance and a finite fourth moment and a vector of coefficients $\btheta \in \R^n$ drawn from the uniform distribution over the unit sphere $\cs^{n-1}$, then, for any $\delta \in (0, 1)$, with probability at least $1 - \delta$, it holds that
\begin{equation}
    \label{eq:ks12}
	\sup\limits_{\substack{a, b \in \R,\\ a < b}} \left| \p\left( a \leq \sum\limits_{i=1}^n \theta_i \xi_i \leq b \cond \right) - \frac1{\sqrt{2\pi}} \int\limits_a^b e^{-x^2/2} \dd x \right|
	\leq \frac{C_{KS} \E \xi_1^4 \log^2(1 / \delta)}{n},
\end{equation}
where $C_{KS}$ is an absolute constant. This result drastically improves the standard Berry-Esseen bound $O(1/\sqrt{n})$. Properties of weighted sums of random variables were further studied in \citep{bcg18, bcg20, bg20, bobkov20}. Recently, Ayvazyan and Ulyanov \cite{au20} extended the inequality \eqref{eq:ks12} to a multivariate case.

\begin{Th}[Ayvazyan and Ulyanov \cite{au20}]
	\label{th:au20}
	Let $\bxi_1, \dots, \bxi_n$ be i.i.d. random vectors in $\R^d$, $\E \bxi_1 = \bzero$, $\E \bxi_1 \bxi_1^\top = I_d$, $\E \|\bxi_1\|^4 < \infty$.
	Denote the family of convex Borel sets in $\R^d$ by $\mathfrak B$ and let $\bfeta \sim \N(\bzero, I_d)$ be the standard Gaussian random vector in $\R^d$. 
	Then, for any $\delta \in (0, 1)$, with probability at least $1 - \delta$ over $\btheta \sim \mathcal U(\cs^{n-1})$, it holds that
	\begin{equation}
	    \label{eq:au20}
		\sup\limits_{B \in \mathfrak B} \left| \p\left( \sum\limits_{i=1}^n \theta_i \bxi_i \in B \cond \right) - \p( \bfeta \in B ) \right|
		\leq \frac{C_d \E \|\bxi_1\|^4 \log^2(1 / \delta)}{n},
	\end{equation}
	where the constant $C_d$ depends on $d$ only.
\end{Th}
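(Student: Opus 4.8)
The plan is to condition on $\btheta$ and bound the convex-set discrepancy between $\law{\boldsymbol S_\btheta \cond}$ and $\N(\bzero, I_d)$, where $\boldsymbol S_\btheta = \sum_{i=1}^n \theta_i \bxi_i$, by a Fourier argument, reserving the randomness of $\btheta$ for a concentration step performed last. It is convenient to represent $\btheta = \boldsymbol g / \|\boldsymbol g\|$ with $\boldsymbol g \sim \N(\bzero, I_n)$, so that the relevant functionals of $\btheta$ become normalized functionals of an i.i.d. Gaussian vector. The guiding observation is that, since $\sum_{i=1}^n \theta_i^2 = 1$, the conditional covariance of $\boldsymbol S_\btheta$ is exactly $\big(\sum_i \theta_i^2\big) I_d = I_d$, so the entire obstruction to Gaussianity is pushed into the higher cumulants, and these carry small weight-dependent factors for a typical $\btheta$.

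First I would write the conditional characteristic function as a product and expand it near the origin: for $\mathbf t \in \R^d$,
\[
    \E\left[ e^{i \langle \mathbf t, \boldsymbol S_\btheta\rangle} \cond \right] = \prod_{i=1}^n \varphi(\theta_i \mathbf t), \qquad \varphi(\mathbf s) = \E\, e^{i \langle \mathbf s, \bxi_1\rangle}.
\]
Taking logarithms and using $\E \bxi_1 = \bzero$, $\E \bxi_1 \bxi_1^\top = I_d$, the quadratic part of $\sum_i \log \varphi(\theta_i \mathbf t)$ reproduces $-\tfrac12 \|\mathbf t\|^2$ exactly, the leading correction is a third-cumulant term proportional to the signed sum $\sum_{i=1}^n \theta_i^3$, and the next term carries $\sum_{i=1}^n \theta_i^4$. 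This is exactly the mechanism behind \eqref{eq:ks12}: for a typical direction the cancellation in $\sum_i \theta_i^3$ makes it far smaller than the crude bound $\sum_i |\theta_i|^3 \asymp n^{-1/2}$ would suggest.

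Next I would quantify the concentration of these functionals under $\btheta \sim \mathcal U(\cs^{n-1})$. Using $\theta_i = g_i / \|\boldsymbol g\|$, symmetry gives $\E \sum_i \theta_i^3 = 0$, while $\sum_i g_i^3$ has standard deviation of order $\sqrt n$ and $\|\boldsymbol g\|^3$ concentrates around $n^{3/2}$, so that $\sum_i \theta_i^3 = O(1/n)$ typically; similarly $\sum_i \theta_i^4$ concentrates around $3/n$. Concretely, I would show that for every $\delta \in (0,1)$, with probability at least $1 - \delta$,
\[
    \Big| \sum_{i=1}^n \theta_i^3 \Big| \lesssim \frac{\log(1/\delta)}{n}, \qquad \sum_{i=1}^n \theta_i^4 \lesssim \frac{\log(1/\delta)}{n}.
\]
Because $g_i^3$ and $g_i^4$ are not sub-Gaussian, these concentration estimates lose a logarithmic power, and combining the cubic-sum event with the high-frequency control below is what produces the polylogarithmic confidence factor $\log^2(1/\delta)$ in \eqref{eq:au20}.

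Finally I would convert the Fourier-side estimate into a supremum over convex Borel sets by means of a multivariate smoothing inequality of Bentkus type; this is where the dimension constant $C_d$ enters, through the Gaussian measure of small neighbourhoods of the boundaries of convex bodies. I expect the main obstacle to be the large-frequency regime. The cumulant expansion is only trustworthy for $\|\mathbf t\| \lesssim \sqrt n$, and to bound $\big| \prod_i \varphi(\theta_i \mathbf t) \big|$ for large $\|\mathbf t\|$ one must use the random rescaling by $\btheta$, together with the finite fourth moment $\E \|\bxi_1\|^4$, to show that the characteristic function stays integrably small on the good event for $\btheta$. This truncation step, rather than the Edgeworth expansion itself, is the technical heart of the argument: it has to be interfaced with the convex-set smoothing so that the surviving rate is $O\big(\log^2(1/\delta)/n\big)$ and does not revert to the Berry--Esseen order $O(1/\sqrt n)$.
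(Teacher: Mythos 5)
First, a point of calibration: this paper never proves Theorem \ref{th:au20} at all --- it is imported verbatim from Ayvazyan and Ulyanov \cite{au20} (the univariate case being Klartag--Sodin \cite{ks12}) and is used downstream as a black box in the proofs of Theorem \ref{th:rankings} and Theorem \ref{th:phi-div}. So your proposal can only be measured against the cited literature, not against an in-paper argument.

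Measured that way, your outline does reproduce the known strategy: the conditional covariance of $\sum_i \theta_i \bxi_i$ is exactly $I_d$ because $\sum_i \theta_i^2 = 1$; the Edgeworth-type expansion of $\prod_i \varphi(\theta_i \mathbf{t})$ has third- and fourth-order terms weighted by $\sum_i \theta_i^3$ and $\sum_i \theta_i^4$; these sphere functionals concentrate at scale $1/n$ via $\btheta = \boldsymbol{g}/\|\boldsymbol{g}\|$; and a Bentkus-type smoothing inequality converts Fourier control into control over convex sets, which is where $C_d$ enters. But as a proof there is a genuine gap, and it is exactly the step you defer: the high-frequency regime. Under the stated hypotheses $\bxi_1$ may be lattice-valued (e.g.\ Rademacher coordinates), in which case $|\varphi|$ equals $1$ on an entire lattice of frequencies, so the decay of $\prod_i |\varphi(\theta_i \mathbf{t})|$ cannot come from the law of $\bxi_1$ at all --- it must come entirely from the incommensurability of the random weights $\theta_i$. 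Moreover, to extract the rate $O(1/n)$ from a smoothing inequality one must push the frequency cutoff to $T \asymp n$ rather than $\sqrt{n}$, so the whole window $\sqrt{n} \lesssim \|\mathbf{t}\| \lesssim n$ --- where the cumulant expansion you rely on is invalid --- has to be controlled by this incommensurability alone, uniformly in $\mathbf{t}$, on a single event of $\btheta$-probability $1-\delta$. That estimate is where most of the work in \cite{ks12} and \cite{au20} lies (and is tied to the $\log^2(1/\delta)$ factor); without it the argument either reverts to the Berry--Esseen rate or fails outright in the lattice case. Naming this obstacle, as you do, is not the same as overcoming it, so the proposal stands as a correct road map rather than a proof.
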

In the present work, we use Theorem \ref{th:au20} to justify convergence rates of the randomized rank-based and phi-divergence test statistics.

The rest of the paper is organized as follows.
In Section \ref{sec:related_work}, we introduce some basic definitions and overview the existing results.
In Section \ref{sec:randomized_rankings} and Section \ref{sec:randomized_phi-div}, we introduce randomized counterparts of some rank-based and phi-divergence test statistics, respectively, and quantify the rates of convergence of the randomized statistics to their limiting distributions in Theorem \ref{th:rankings} and Theorem \ref{th:phi-div}. Section \ref{sec:proofs} contains the proofs of our main results.

\subsection*{Notation}

For a set $\mathsf X \subset \R$, we denote a class of $k$ times differentiable functions with a bounded on $\mathsf X$ $k$-th derivative by $\C_b^k(\mathsf X)$.
The notation $\C_b^{j,k}(\mathsf X)$, $j < k$, stands for the class of functions from $\C_b^k(\mathsf X)$ with bounded on $\mathsf X$ derivatives of orders $j, j+1, \dots, k$.
We reserve a bold font for vectors while matrices and scalars are written in regular font.
Let $\alpha, \beta, \gamma$ be some parameters. Along with the standard $O(\cdot)$ notation, we use $O_{\alpha, \beta, \gamma}(\cdot)$ to emphasize that the hidden constant depends on $\alpha, \beta, \gamma$.
Sometimes we specify the dependence on parameters in the text if the notation becomes cumbersome.
Throughout the paper, the notation $f(n) \lesssim g(n)$ means that there exists a universal constant $c > 0$ such that $f(n) \leq cg(n)$ for all $n \in \mathbb N$.

\section{Related work}
\label{sec:related_work}

\subsection{Test statistics based on n rankings}

Let $\{ O_{ij} : 1 \leq i \leq n, \, 1 \leq j \leq r\}$ be independent random objects. For each $i$ from $1$ to $n$, let $\bo_i = (O_{i1}, \dots, O_{ir})$ be a tuple of size $r$ such that its components are distributed according to continuous cumulative distribution functions $F_{i1}, \dots, F_{ir}$. A statistician observes $n$ rankings $\{\bpi_i : 1 \leq i \leq n\}$ where $\bpi_i$ is a ranking of components of $\bo_i$. We treat each $\bpi_i = (\pi_{i1}, \dots, \pi_{ir})$ as a permutation of $\{1, \dots, r \}$. The random objects $O_{ij}$'s themselves may not be available to the statistician. The method of $n$ rankings \cite{sen68} is used to test the null hypothesis
\[
	H_0 : F_{i1} \equiv F_{i2} \equiv \dots \equiv F_{ir} \equiv F_i
	\quad \text{for all $i \in \{1, \dots, n\}$}.
\]
It is clear that, under $H_0$, the rankings $\bpi_1, \dots, \bpi_n$ are i.i.d. random permutations of $r$ elements with a uniform distribution.

To test the null hypothesis, Sen \cite{sen68} proposed the following family of statistics. Fix a function $\J : \{1, \dots r\} \rightarrow \R$ and introduce
\begin{equation}
	\label{eq:overline_j}
	\overline \J = \frac1r \sum\limits_{k=1}^r \J(k),
	\qquad
	\sigma^2_\J = \frac1{r - 1} \sum\limits_{k=1}^r \left( \J(k) - \overline\J \right)^2.
\end{equation}
For each $i \in \{1, \dots, n\}$, denote
\begin{equation}
	\label{eq:v}
	\bV_i^\J = \left( \J(\pi_{i1}), \dots, \J(\pi_{ir}) \right)^\top \in \R^r.
\end{equation}
The test statistic of the method of $n$ rankings, associated with $\J$, is given by
\begin{equation}
	\label{eq:tj}
	T_\J = \frac{1}{\sigma^2_\J n} \left\| \sum\limits_{i=1}^n \left( \bV_i^\J - \overline\J \bone \right) \right\|^2,
\end{equation}
where $\bone = (1, \dots, 1)^\top \in \R^r$. We proceed with two examples corresponding to particular choices of the function $\J$.

\begin{Ex}[Friedman's test statistic]
	Friedman's test statistic \citep{friedman37} corresponds to $\J(k) \equiv k$. In this case,
	\[
		\overline\J = \frac{r + 1}{2}
		\quad \text{and} \quad
		\sigma^2_\J = \frac{r(r+1)}{12}.
	\]
	Friedman's statistic is then defined as
	\[
		T_F = \frac{12}{r(r+1)n} \left\| \sum\limits_{i=1}^n \left( \bpi_i - \frac{r+1}2 \bone \right) \right\|^2.
	\]
\end{Ex}

\begin{Ex}[Brown-Mood's test statistic]
	If we fix $a \in \{1, \dots, r-1\}$ and take $\J(k) = \1(k \leq a)$, we obtain Brown-Mood's test statistic \citep{bm51}. It is straightforward to compute
	\[
		\overline\J = \frac ar
		\quad \text{and} \quad
		\sigma^2_\J = \frac{a(r - a)}{r(r - 1)},
	\]
	and then Brown-Mood's statistic is given by
	\[
		T_{BM} = \frac{r(r - 1)}{a(r - a)n} \left\| \sum\limits_{i=1}^n \left( \bV_i^{BM} - \frac ar \bone \right) \right\|^2,
	\]
	where $\bV_i^{BM} = \left( \1(\pi_{i1} \leq a), \dots, \1(\pi_{ir} \leq a) \right)^\top$. The statistic $T_{BM}$ was proposed in \cite{bm51} as an alternative to Friedman's statistic.
\end{Ex}

There is not much literature concerning the properties of $T_\J$. In \citep{sen68}, the author proved that $T_\J$ is asymptotically distributed as $\chi^2(r-1)$. One of the most famous examples of $T_\J$, Friedman's statistic $T_F$, is studied a bit better. In \citep{jensen77}, using \citep[Chapter 7, Theorem 1]{esseen45}, the author argued that the Kolmogorov distance between the distribution of $T_F$ and $\chi^2(r-1)$ decays as $O_r(n^{-1 + 1/r})$.  
The notation $O_r$ emphasizes that the hidden constant depends on $r$. To our knowledge, there are no similar bounds for $d_K(\law{T_\J}, \chi^2(r-1))$ in the literature. Fortunately, one can derive even better rates of convergence than $O_r(n^{-1 + 1/r})$ using recent advances in CLT for quadratic forms \cite[Theorem 1.2]{gz14} in addition to the classical result \cite[Chapter 7, Theorem 1]{esseen45}. We provide the details in Proposition \ref{prop:rankings} below. Its proof is postponed to Appendix \ref{app:prop_proof}.

\begin{Prop}
	\label{prop:rankings}
	Let $T_\J$ be as defined in \eqref{eq:tj}.
	Assume that the function $\J : \{1, \dots, r\} \rightarrow \R$ is such that, for all $k$ from $1$ to $r$,
	\[
		\left| \J(k) - \overline\J \right| \leq B,
	\]
	where $\overline\J$ is defined in \eqref{eq:overline_j} and $B$ is a constant (possibly depending on $r$).
	Then it holds that
	\[
		\sup\limits_{t \in \R} \left| \p\left( \ct_\J > t \right) - \p( Z > t ) \right| =
		\begin{cases}
			O \left( B^3 n^{ -1 + 1/r} / \sigma_\J^3 \right), \quad \text{if $2 \leq r \leq 5$},\\
			O_r\left( B^2 n^{-1} / \sigma_\J^2 \right), \quad \text{if $r \geq 6$},
		\end{cases}
	\]
	where $Z \sim \chi^2(r-1)$ and $\sigma_\J^2$ is defined in \eqref{eq:overline_j}.
\end{Prop}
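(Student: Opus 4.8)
The plan is to rewrite $T_\J$ as the squared Euclidean norm of a normalized sum of i.i.d. random vectors and then read off the rate from the two cited Gaussian-approximation results over Euclidean balls. First I would set $\bW_i = \bV_i^\J - \overline\J\bone$ and note that, under $H_0$, the vectors $\bW_1, \dots, \bW_n$ are i.i.d. Since each $\bpi_i$ is uniform, $\E\J(\pi_{ik}) = \overline\J$, so $\E\bW_i = \bzero$; a short computation, using that $(\pi_{ik},\pi_{il})$ is a uniformly chosen ordered pair of distinct indices for $k \neq l$, gives the covariance $\E\bW_i\bW_i^\top = \sigma_\J^2\,(I_r - r^{-1}\bone\bone^\top)$, i.e. $\sigma_\J^2$ times the orthogonal projector onto $\bone^\perp$. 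In particular $\bone^\top\bW_i = 0$ almost surely, so the problem lives in the $(r-1)$-dimensional subspace $\bone^\perp$, on which the covariance is exactly $\sigma_\J^2 I_{r-1}$. Fixing an orthonormal basis of $\bone^\perp$ and letting $\bxi_i \in \R^{r-1}$ collect the coordinates of $\bW_i/\sigma_\J$ in this basis yields i.i.d. centered vectors with $\E\bxi_i\bxi_i^\top = I_{r-1}$ and, since $|\J(k)-\overline\J|\le B$, uniformly bounded norm (so all moments are finite and controlled by powers of $B/\sigma_\J$).

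The second step is to record the identity
\[
T_\J = \frac1{\sigma_\J^2 n}\Bigl\|\sum_{i=1}^n \bW_i\Bigr\|^2 = \Bigl\| \frac1{\sqrt n}\sum_{i=1}^n \bxi_i \Bigr\|^2,
\]
valid because $\sum_i\bW_i \in \bone^\perp$ and the basis is orthonormal. Hence $\p(T_\J > t) = \p(\|n^{-1/2}\sum_i \bxi_i\| > \sqrt t)$ for every $t \ge 0$, while $Z = \|\bfeta\|^2$ with $\bfeta\sim\N(\bzero,I_{r-1})$, so that
\[
\sup_t \bigl| \p(T_\J > t) - \p(Z > t) \bigr| = \sup_{\rho \ge 0} \Bigl| \p\bigl( \|n^{-1/2}\textstyle\sum_i \bxi_i\| \le \rho \bigr) - \p(\|\bfeta\| \le \rho) \Bigr|.
\]
This is precisely the accuracy of the Gaussian approximation for $n^{-1/2}\sum_i\bxi_i$ measured over the class of centered Euclidean balls in $\R^{r-1}$.

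The third step feeds this into the quoted theorems with effective dimension $d = r-1$. For $d \ge 5$, i.e. $r \ge 6$, the CLT for quadratic forms \cite[Theorem 1.2]{gz14} delivers the optimal rate $O(n^{-1})$; for $d \le 4$, i.e. $2 \le r \le 5$, the best possible rate over balls degrades to $n^{-d/(d+1)} = n^{-1+1/r}$, furnished by the classical estimate \cite[Chapter 7, Theorem 1]{esseen45}. In both regimes the explicit prefactor comes from matching the standardized increments $\bxi_i$ to the normalizations used in these theorems: since $\|\bW_i\| \le B\sqrt r$ while the relevant standard deviation is $\sigma_\J$, the scale-invariant moment quantities are controlled by powers of $B/\sigma_\J$, producing $B^2/\sigma_\J^2$ for $r \ge 6$ and $B^3/\sigma_\J^3$ for $r \le 5$.

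The main obstacle is not the probabilistic heavy lifting, which is delegated to \cite{gz14} and \cite{esseen45}, but verifying that our concrete vectors meet their hypotheses and matching the constants. Two points require care. First, the covariance $\sigma_\J^2(I_r - r^{-1}\bone\bone^\top)$ is rank-deficient in $\R^r$, so one must pass to the non-degenerate $(r-1)$-dimensional problem on $\bone^\perp$ before invoking either theorem; this is exactly what makes the effective dimension $r-1$, and hence the threshold $r \ge 6$, appear. Second, one must check the regularity and moment conditions of the cited results for the discrete, finitely supported law of $\bxi_i$ and track how the scaling by $\sigma_\J$ and the uniform bound $B$ enter; this bookkeeping is what produces the explicit $B/\sigma_\J$ factors and the $r$-dependence hidden in $O_r(\cdot)$.
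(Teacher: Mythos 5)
Your proposal is correct and follows essentially the same route as the paper's own proof: reduce to the $(r-1)$-dimensional subspace $\bone^\perp$ where the covariance is $\sigma_\J^2 I_{r-1}$ (your division by $\sigma_\J$ in an orthonormal basis of $\bone^\perp$ is exactly the paper's $(\Sigma_\J^\dag)^{1/2}$ followed by the embedding $U^\top$), identify the Kolmogorov distance with Gaussian approximation over centered Euclidean balls, and invoke \cite[Chapter 7, Theorem 1]{esseen45} for $2 \leq r \leq 5$ and \cite[Theorem 1.2]{gz14} for $r \geq 6$, with the boundedness assumption $|\J(k)-\overline\J| \leq B$ used to reduce fourth moments to second moments and yield the $B^2/\sigma_\J^2$ and $B^3/\sigma_\J^3$ prefactors.
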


Another line of research devoted to the rank-based statistics uses Stein's method to derive rates of convergence to the chi-squared distribution in different IPMs. In the recent paper \citep{gr21}, the authors considered a class $\C^{1,3}_b(\R_+)$ of test functions with bounded derivatives of order $1, 2$, and $3$ (functions from $\C^{1,3}_b(\R_+)$ do not need to be bounded).
They proved that, for any $h \in \C^{1,3}_b(\R_+)$,
\[
    \E h(T_F) - \E_{Z \sim \chi^2(r-1)} h(Z)
    \lesssim \frac rn \left( \|h'\|_{L_\infty(\R_+)} + \frac rn \left( \|h''\|_{L_\infty(\R_+)} + \|h'''\|_{L_\infty(\R_+)}\right) \right).
\]
Moreover, based on the findings of \citep{gaunt20}, Gaunt and Reinert \cite[Proposition 1.5]{gr21} derived a $O(n^{-1/2})$ upper bound on the Wasserstein distance between $T_F$ and $\chi^2(1)$ for the case $r=2$.

\subsection{Phi-divergence test statistic}
\label{sec:phi-div}

Let $\bp = (p_1, \dots, p_r)$ be a $r$-dimensional probability vector and let $\by = (Y_1, \dots, Y_r)$ have a multinomial distribution $\Mult(n, \bp)$, that is, for any $n_1, \dots, n_r \in \{0, 1, \dots, n\}$ such that  $n_1 + \dots + n_r = n$, 
\[
	\p\left( Y_1 = n_1, Y_2 = n_2, \dots, Y_r = n_r \right)
	= \frac{n!}{n_1! n_2! \dots n_r!} p_1^{n_1} p_2^{n_2} \dots p_r^{n_r}.
\]
A phi-divergence test statistic is defined as (see, for instance, \cite[Section 3.4.1]{pardo05})
\begin{equation}
	\label{eq:phi-div}
	T_\phi = \frac{2n}{\phi''(1)} \sum\limits_{j=1}^r p_j \phi\left( \frac{Y_j}{n p_j} \right).
\end{equation}
Here $\phi$ is a convex non-negative function on $\R_+$ such that
\begin{align*}
	&
	\phi(1) = \phi'(1) = 0, \quad \phi''(1) > 0.
\end{align*}
\begin{Ex}
	An important example
	\[
	    \phi_\lambda(u) = \frac{u^{\lambda + 1} - (\lambda + 1) (u - 1) - 1}{\lambda(\lambda + 1)}
	\]
	corresponds to the family of power divergence test statistics \citep{cr84}:
	\[
	    T_{\phi_\lambda} = \frac{2}{\lambda(\lambda + 1)} \sum\limits_{j=1}^r Y_j \left[ \left( \frac{Y_j}{n p_j} \right)^\lambda - 1 \right].
	\]
    When $\lambda = -1$ or $\lambda = 0$, $\phi_\lambda(u)$ should be understood as a passage to limit, that is,
    \[
        \varphi_{-1}(u) = -\log u + u - 1
        \quad \text{and} \quad
        \varphi_0(u) = u\log u - u + 1.
    \]
	The values $\lambda = -1$, $\lambda = -1/2$, $\lambda = 0$, $\lambda = 2/3$, and $\lambda = 1$ correspond to the modified log-likelihood ratio, Freeman-Tukey, log-likelihood ratio, Cressie-Read, and Pearson test statistics, respectively.
\end{Ex}

Particular examples of phi-divergence test statistics were extensively studied in the literature.
For Pearson's statistic $T_{\phi_1}$, G{\" o}tze and Ulyanov \cite{gu03} used an expansion from \cite{yarnold72} and advances in lattice point problems \citep[Theorem 1.5]{gotze04} to prove the upper bound
\[
    d_K\left(\law{T_{\phi_1}}, \chi^2(r-1) \right)
    = \begin{cases}
        O_{p_{\min}}\left(n^{-1 + 1/r} \right), \quad \text{if $2 \leq r \leq 5$},\\
        O_{r, p_{\min}}\left( n^{-1} \right), \quad \text{if $r \geq 6$}.
    \end{cases}
\]
Note that the Kolmogorov distance between Pearson's statistic and the chi-squared distribution decays as $O(1/n)$ not for all $r \geq 2$. For instance, in the simplest case $r=2$ and $\bp = (1/2, 1/2)^\top$, Pearson's statistic becomes proportional to a squared centered binomial random variable $\text{Binom}(n, 1/2)$ and, consequently, has an atom with probability mass of order $n^{-1/2}$ at zero for all even $n$. This yields that the $O(1/\sqrt{n})$ rate of convergence cannot be improved for $r=2$. The question about the optimality of the upper bound $O(n^{-1 + 1/r})$ for $r \in \{3, 4, 5\}$ still remains open.

The approach of G{\" o}tze and Ulyanov \cite{gu03} was further developed in \cite{uz09} for the power divergence family of statistics and then refined in \cite{assylbekov10, azu11}:
\begin{equation}
	\label{eq:power_div_rates}
	d_K\left(\law{T_{\phi_\lambda}}, \chi^2(r-1) \right)
	= \begin{cases}
		O\left(n^{-50/73} (\log n)^{315/146} \right), \quad \text{if $r = 3$},\\
		O\left(n^{-1 + 6/(7r-3)} \right), \quad \text{if $4 \leq r \leq 8$},\\
		O\left(n^{-1 + 5/(6r-4)} \right), \quad \text{if $r \geq 9$}.
	\end{cases}
\end{equation}
Here $\lambda \in \R$ and the hidden constants in $O(\cdot)$ depend on $r, \lambda$, and $p_{\min}$.
The main obstacle in the analysis of $T_{\phi_\lambda}$, $\lambda \neq 1$, is that, in contrast to Pearson's statistic, it is not a quadratic form and, consequently, the results of G{\" o}tze \cite[Theorem 1.5]{gotze04} or of G{\" o}tze and Zaitsev \cite[Theorem 1.2]{gz14} are not applicable anymore. Due to Taylor's expansion, the phi-divergence test statistic $T_\phi$ can be represented in the form
\[
	T_\phi
	= \sum\limits_{j=1}^r \frac{(Y_j - np_j)^2}{np_j} + \frac{\phi'''(1)}{3\phi''(1)} \sum\limits_{j=1}^r \frac{(Y_j - np_j)^3}{n^2 p_j^2} + R,
\]
where $R$ is a remainder of order $O((\log n)^4 / n)$. It follows from \cite{gu03} that the quadratic term
\[
	\sum\limits_{j=1}^r \frac{(Y_j - n p_j)^2}{n p_j}
\]
convergences in distribution to $\chi^2(r-1)$ with the speed $O_{r, p_{\min}}(1/n)$, provided that $r \geq 6$. However, the cubic terms
\[
	\frac{\phi'''(1)}{3\phi''(1)} \sum\limits_{j=1}^r \frac{(Y_j - np_j)^3}{n^2 p_j^2}
\]
of order $O(n^{-1/2})$ worsen this rate of convergence as one can see in \eqref{eq:power_div_rates}. In Section \ref{sec:randomized_phi-div}, we introduce a randomized counterpart of $T_\phi$ to bypass this obstacle.

In \cite{gpr17} and \cite{gaunt21}, the authors used Stein's method to specify the rates of convergence of the Pearson and power divergence statistics to $\chi^2(r-1)$ in terms of integral probability metrics with $\ch = \C_b^2(\R_+)$ and $\ch = \C_b^5(\R_+)$.
For Pearson's statistic, Gaunt et al. \cite{gpr17} obtained
\[
    d_{\C_b^2(\R_+)}\left(\law{T_{\phi_1}}, \chi^2(r-1) \right) = O_r\left( \frac1{\sqrt{ n p_{\min}}} \right)
\]
and
\[
    d_{\C_b^5(\R_+)}\left(\law{T_{\phi_1}}, \chi^2(r-1) \right) = O_r\left( \frac1{n p_{\min}} \right)
\]
with explicit constants and dependence on $r$ and $p_{\min} = \min\{p_1, \dots, p_r\}$.
Similarly, Gaunt \cite{gaunt21} proved that
\[
    d_{\C_b^{1, 2}(\R_+)} \left(\law{T_{\phi_\lambda}}, \chi^2(r-1) \right) = O_{r, \lambda} \left( \frac1{\sqrt{ n p_{\min}}} \right)
\]
and
\[
    d_{\C_b^{1, 5}(\R_+)} \left(\law{T_{\phi_\lambda}}, \chi^2(r-1) \right) = O_{r, \lambda} \left( \frac1{n p_{\min}} \right)
\]
for slightly broader classes of test functions.
For a general function $\phi$, it is known that $T_\phi$ has an asymptotic distribution $\chi^2(r-1)$ (see, for example, \cite[Theorem 3.1]{pardo05}).
We have not found finite sample bounds on the Kolmogorov distance between the law of $T_\phi$ and $\chi^2(r-1)$ in the literature but, following the proof of Pardo \cite[Theorem 3.1]{pardo05} and applying the Berry-Esseen inequality and standard concentration inequalities, one can show that $d_K(\law{T_\phi}, \chi^2(r-1))$ decays as fast as $O_{r, p_{\min}}(n^{-1/2})$ under mild assumptions on the function $\phi$.

\section{Randomized method of n rankings}
\label{sec:randomized_rankings}

In this section, we illustrate our approach with a simple example of randomized method of $n$ rankings.
Let $\btheta$ be distributed uniformly on a unit sphere $\cs^{n-1}$.
Define a randomized counterpart of $T_\J$ (given by \eqref{eq:tj}) as follows:
\begin{equation}
	\label{eq:rand_tj}
	\ct_\J = \frac{1}{\sigma^2_\J} \left\| \sum\limits_{i=1}^n \theta_i \left( \bV_i^\J - \overline\J \bone \right) \right\|^2,
\end{equation}
where $\sigma^2_\J$, $\overline \J$, and $\bV_1^\J, \dots, \bV_n^\J$ were introduced in \eqref{eq:overline_j} and \eqref{eq:v}. For practical purposes, note that there is an easy way to sample from the uniform distribution on the sphere. If one has an isotropic Gaussian vector $\bgamma \sim \N(\bzero, I_n)$, then $\btheta$ has the same distribution as $\bgamma / \|\bgamma\|$.

\begin{Rem}
	Using the equalities
	\[
		\E \theta_j \theta_k
		= \begin{cases}
			\frac1n, \quad \text{if $j = k$,}\\
			0, \quad \text{otherwise,}
		\end{cases}
	\]
	it is easy to show that
	\[
		\E \left( \ct_\J \,\vert\, \bpi_1, \dots, \bpi_n \right) = T_\J
		\quad \text{almost surely,}
	\]
	that is, the averaging of the randomized statistic $\ct_\J$ with respect to external randomization results in the standard statistic $T_\J$.
\end{Rem}

The modified statistic $\ct_\J$ converges to the limiting distribution $\chi^2(r-1)$ as its counterpart $T_\J$ does.
Using Theorem \ref{th:au20}, it is easy to show that the Kolmogorov distance between $\ct_\J$ and $\chi^2(r-1)$ decays as fast as $O(1/n)$. We provide a rigorous statement in the following theorem.

\begin{Th}
	\label{th:rankings}
	Let $\ct_\J$ be as defined in \eqref{eq:rand_tj}.
	Assume that the function $\J : \{1, \dots, r\} \rightarrow \R$ is such that, for all $k$ from $1$ to $r$,
	\begin{equation}
		\label{eq:j_boundness}
		\left| \J(k) - \overline\J \right| \leq B
		\quad \text{almost surely},
	\end{equation}
	where $\overline\J$ is defined in \eqref{eq:overline_j} and $B$ is a constant (possibly depending on $r$).
	Then, for any $\delta \in (0, 1)$, with probability at least $1 - \delta$ (over $\btheta$ uniformly distributed on the 	unit sphere $\cs^{n-1}$), it holds that
	\begin{equation}
		\label{eq:rankings_bound}
		\sup\limits_{t \in \R} \left| \p\left( \ct_\J > t \cond \right)  - \p\left( Z > t \right) \right|
		\leq
		\frac{C_{r-1} r^2 B^2 \log^2(1 / \delta)}{\sigma_\J^2 n},
	\end{equation}
	where $Z \sim \chi^2(r-1)$, $C_{r-1}$ is the same constant as in \eqref{eq:au20}, and $\sigma_\J^2$ is defined in \eqref{eq:overline_j}.
\end{Th}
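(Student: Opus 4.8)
The plan is to realize $\ct_\J$ as the squared Euclidean norm of a weighted sum of i.i.d.\ vectors and then invoke Theorem~\ref{th:au20}. Under $H_0$ the permutations $\bpi_1, \dots, \bpi_n$ are i.i.d.\ uniform, so the vectors $\bxi_i := \bV_i^\J - \overline\J\bone$ are i.i.d., and I would first record their first two moments. Marginally each $\pi_{ij}$ is uniform on $\{1, \dots, r\}$, so $\E\J(\pi_{ij}) = \overline\J$ and hence $\E\bxi_i = \bzero$; evaluating the two-coordinate marginals of a uniform permutation gives the covariance $\E\bxi_i\bxi_i^\top = \sigma_\J^2\bigl(I_r - \tfrac{1}{r}\bone\bone^\top\bigr) =: \sigma_\J^2 P$, where $P$ is the orthogonal projector onto the hyperplane $\{x \in \R^r : \bone^\top x = 0\}$. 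The crucial structural point is that this covariance is \emph{rank-deficient}: since $\bone^\top\bxi_i = \sum_{j=1}^r \J(\pi_{ij}) - r\overline\J = 0$ almost surely, every $\bxi_i$, and therefore every weighted sum $\sum_i \theta_i\bxi_i$, lives in the $(r-1)$-dimensional subspace $\mathrm{range}(P)$. Theorem~\ref{th:au20} cannot be applied in $\R^r$, so a dimension reduction is forced.

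To reduce the dimension I would fix a matrix $U \in \R^{r\times(r-1)}$ whose columns form an orthonormal basis of $\mathrm{range}(P)$ and set $\zeta_i := \sigma_\J^{-1}U^\top\bxi_i \in \R^{r-1}$. Because $PU = U$ and $U^\top U = I_{r-1}$, these satisfy $\E\zeta_i = \bzero$ and $\E\zeta_i\zeta_i^\top = \sigma_\J^{-2}U^\top(\sigma_\J^2 P)U = I_{r-1}$, exactly the normalization required by Theorem~\ref{th:au20}. Moreover, the map $v \mapsto U^\top v$ is an isometry on $\mathrm{range}(U)$, so applying it to $\sum_i\theta_i\bxi_i \in \mathrm{range}(U)$ yields $\bigl\|\sum_i\theta_i\bxi_i\bigr\| = \bigl\|\sum_i\theta_i U^\top\bxi_i\bigr\|$ and hence
\[
	\ct_\J = \frac{1}{\sigma_\J^2}\Bigl\|\sum_{i=1}^n\theta_i\bxi_i\Bigr\|^2 = \Bigl\|\sum_{i=1}^n\theta_i\zeta_i\Bigr\|^2 .
\]

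Next I would translate the Kolmogorov distance into a discrepancy over convex sets. Writing $Z \eqd \|\bfeta\|^2$ with $\bfeta \sim \N(\bzero, I_{r-1})$, for every $t \geq 0$ the complementary events $\{\ct_\J \leq t\}$ and $\{Z \leq t\}$ are respectively $\{\sum_i\theta_i\zeta_i \in \ce_t\}$ and $\{\bfeta \in \ce_t\}$ for the closed ball $\ce_t := \{x \in \R^{r-1} : \|x\| \leq \sqrt t\}$, a convex Borel set, while for $t < 0$ both tail probabilities equal $1$. Since a set and its complement give the same absolute discrepancy between two measures, this yields
\[
	\sup_{t \in \R}\Bigl|\p\bigl(\ct_\J > t \cond\bigr) - \p(Z > t)\Bigr| \leq \sup_{B \in \mathfrak B}\Bigl|\p\Bigl(\sum_{i=1}^n\theta_i\zeta_i \in B \cond\Bigr) - \p(\bfeta \in B)\Bigr|.
\]
Theorem~\ref{th:au20} with $d = r-1$ bounds the right-hand side, with probability at least $1-\delta$, by $C_{r-1}\,\E\|\zeta_1\|^4\,\log^2(1/\delta)/n$. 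It remains to control $\E\|\zeta_1\|^4$ via assumption~\eqref{eq:j_boundness}: by the isometry, $\|\zeta_1\|^2 = \sigma_\J^{-2}\sum_{j=1}^r(\J(\pi_{1j}) - \overline\J)^2$, which (as $\bpi_1$ is a permutation) equals the constant $r-1$ and is also at most $rB^2/\sigma_\J^2$; combining $\E\|\zeta_1\|^2 = \mathrm{tr}(I_{r-1}) = r-1$ with the pointwise bound gives $\E\|\zeta_1\|^4 \leq (r-1)\cdot rB^2/\sigma_\J^2 \leq r^2B^2/\sigma_\J^2$, which is exactly \eqref{eq:rankings_bound}.

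The main obstacle is the degeneracy of the covariance of $\bxi_i$: the statistic genuinely lives in $\R^{r-1}$, so the real content of the argument is the bookkeeping showing that the single projection $U^\top$ simultaneously (i) preserves the statistic through the isometry identity, (ii) turns the degenerate covariance $\sigma_\J^2 P$ into $I_{r-1}$, and (iii) maps the level sets $\{\ct_\J \leq t\}$ onto genuine Euclidean balls, so that Theorem~\ref{th:au20} is applied to precisely the right family of convex sets. Everything else reduces to the moment computations above.
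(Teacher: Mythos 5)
Your proposal is correct and follows essentially the same route as the paper: normalize the degenerate vectors $\bV_i^\J - \overline\J\bone$ into isotropic $(r-1)$-dimensional vectors (the paper uses $(\Sigma_\J^\dag)^{1/2}$ and restricts to $\bone^\perp$, which is the same device as your explicit orthonormal embedding $U$), apply Theorem~\ref{th:au20} to the Euclidean balls $\{\|x\| \leq \sqrt t\}$, and bound the fourth moment by $r^2B^2/\sigma_\J^2$ via the assumption $|\J(k)-\overline\J|\leq B$. Your observation that $\|\zeta_1\|^2 = r-1$ is deterministic (since $\bpi_1$ is a permutation) is a slightly cleaner way to get the moment bound than the paper's trace computation, but it leads to the same estimate and does not change the argument.
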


\begin{Rem}
The proof of Theorem \ref{th:rankings} uses even weaker result than the one of Theorem \ref{th:au20}.
Namely, instead of comparing the probabilities in \eqref{eq:au20} on all convex Borel sets, it would be enough to ensure that \eqref{eq:au20} holds for centered Euclidean balls only. 
\end{Rem}

The proof of Theorem \ref{th:rankings} is postponed to Section \ref{sec:proof_rankings}.
Before we prove Theorem \ref{th:rankings}, let us specify the right hand side of \eqref{eq:rankings_bound} for the case of Friedman's test statistic. If $\J(k) \equiv k$, then it is easy to check that \eqref{eq:j_boundness} holds with $B = (r+1) / 2$ and that $\sigma_\J^2 = r(r + 1) / 12$. Substituting these bounds into \eqref{eq:rankings_bound}, we get the following corollary.

\begin{Co}
	\label{co:friedman}
	Consider the randomized Friedman's test statistic
	\[
		\ct_F = \frac{12}{r(r+1)} \left\| \sum\limits_{i=1}^n \theta_i \left( \bpi_i - \frac{r+1}2 \bone \right) \right\|^2,
	\]
	where $\btheta$ is drawn from a uniform distribution on the unit sphere $\cs^{n-1}$. Then, for any $\delta \in (0, 1)$, with probability at least $1 - \delta$ (over realizations of $\btheta$), it holds that
	\[
		\sup\limits_{t \in \R} \left| \p\left( \ct_F > t \cond \right)  - \p\left( Z > t \right) \right|
		\leq
		\frac{36 C_{r-1} (r+1)^2\log^2(1 / \delta)}{n},
	\]
	where $Z \sim \chi^2(r-1)$ and $C_{r-1}$ is the same constant as in \eqref{eq:au20}.
\end{Co}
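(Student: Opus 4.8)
The plan is to obtain Corollary \ref{co:friedman} as a direct specialization of Theorem \ref{th:rankings} to the score function $\J(k) \equiv k$, so that no new probabilistic argument is needed beyond a careful bookkeeping of constants. First I would observe that, for this choice, $\bV_i^\J = \bpi_i$, so that the definition \eqref{eq:rand_tj} reads $\ct_\J = \frac1{\sigma_\J^2}\|\sum_{i=1}^n \theta_i(\bpi_i - \overline\J\bone)\|^2$; substituting the values of $\overline\J$ and $\sigma_\J^2$ computed below turns this expression into exactly the $\ct_F$ displayed in the statement (since $1/\sigma_\J^2 = 12/(r(r+1))$ and $\overline\J\bone = \tfrac{r+1}2\bone$). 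Thus it suffices to evaluate the right-hand side of \eqref{eq:rankings_bound} for $\J(k)\equiv k$.

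Next I would record the two elementary identities $\overline\J = \frac1r\sum_{k=1}^r k = \frac{r+1}2$ and $\sigma_\J^2 = \frac1{r-1}\sum_{k=1}^r(k - \tfrac{r+1}2)^2 = \frac{r(r+1)}{12}$, both of which follow from the standard formulas for $\sum_{k=1}^r k$ and $\sum_{k=1}^r k^2$. To apply Theorem \ref{th:rankings} I then need a constant $B$ satisfying the boundedness hypothesis \eqref{eq:j_boundness}. Here $\max_{1\leq k\leq r}|k - \tfrac{r+1}2| = \frac{r-1}2$, attained at $k=1$ and $k=r$, so \eqref{eq:j_boundness} holds with any $B \geq \frac{r-1}2$; for a clean closed form I would simply take $B = \frac{r+1}2$, which is admissible since $\frac{r+1}2 \geq \frac{r-1}2$.

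Finally I would substitute $B = \frac{r+1}2$ and $\sigma_\J^2 = \frac{r(r+1)}{12}$ into the bound \eqref{eq:rankings_bound}. The quantity $r^2 B^2/\sigma_\J^2$ simplifies to $r^2\cdot\frac{(r+1)^2}4\cdot\frac{12}{r(r+1)} = 3r(r+1)$, so Theorem \ref{th:rankings} already yields the sharper bound $\frac{3C_{r-1}r(r+1)\log^2(1/\delta)}{n}$ with probability at least $1-\delta$ over $\btheta \sim \mathcal U(\cs^{n-1})$. Crudely bounding $3r(r+1)\leq 3(r+1)^2 \leq 36(r+1)^2$ then produces the stated constant, giving $\frac{36 C_{r-1}(r+1)^2\log^2(1/\delta)}{n}$. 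Since the whole argument is a substitution into an already-proved theorem, there is no genuine obstacle; the only points requiring a little care are the verification that the chosen $B$ is a legitimate (if non-tight) bound in \eqref{eq:j_boundness} and that the closing arithmetic is written as an inequality rather than an equality, the claimed constant being deliberately loose.
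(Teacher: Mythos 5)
Your proof is correct and takes essentially the same route as the paper, which obtains Corollary \ref{co:friedman} by exactly this substitution: it notes that \eqref{eq:j_boundness} holds with $B=(r+1)/2$ and that $\sigma_\J^2 = r(r+1)/12$, and plugs these into \eqref{eq:rankings_bound}. Your intermediate bound $3C_{r-1}\,r(r+1)\log^2(1/\delta)/n$ is even slightly sharper than the stated one, and you legitimately relax it to the constant $36(r+1)^2$ appearing in the corollary.
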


Corollary \ref{co:friedman} yields that the Kolmogorov distance between $\ct_F$ and its limiting distribution decays as fast as $O(1/n)$, which improves over the best known bound for the standard Friedman statistic (see \citep[Theorem 2]{jensen77} and Proposition \ref{prop:rankings}) in the case $2 \leq r \leq 5$.

\section{Randomized phi-divergence test statistics}
\label{sec:randomized_phi-div}

Let us describe the construction of the randomized phi-divergence test statistics.
Note that, the vector $\by \sim \Mult(n, \bp)$ can be represented as a sum of i.i.d. multinomial $\Mult(1, \bp)$ random vectors $\bfeta_1, \dots, \bfeta_n$:
\begin{equation}
    \label{eq:y_decomposition}
    \by = \sum\limits_{i=1}^n \bfeta_i.
\end{equation}
It is straightforward to check that
\begin{equation}
    \label{eq:sigma}
	\E \bfeta_1 = \bp, \quad \text{Var}(\bfeta_1) = D - \bp\bp^\top \equiv \Sigma,
\end{equation}
where $D = \text{diag}(\bp)$.
Let $\btheta $ have a uniform distribution on a unit sphere $\cs^{n-1}$.
Define the centered random vectors $\bxi_i = \bfeta_i - \bp$, $1 \leq i \leq n$, a weighted sum
\[
	\bx^\btheta = \sum\limits_{i=1}^n \theta_i \bxi_i.
\]
and introduce a randomized phi-divergence test statistic
\begin{equation}
	\label{eq:rand_phi-div}
	\ct_\phi = \frac{2n}{\phi''(1)} \sum\limits_{j=1}^r p_j \phi\left( 1 + 
	\frac{X_j^\btheta}{\sqrt{n} 
	p_j} \right).
\end{equation}
In the rest of the paper, we assume that the function $\phi$ in \eqref{eq:rand_phi-div} satisfies the following.
\begin{As}
	\label{as:phi}
	The function $\phi$ is three times differentiable at $1$,
	\[
		\phi(1) = \phi'(1) = 0, \qquad \phi''(1) > 0,
	\]
	and the third derivative $\phi'''$ is $\ls$-Lipschitz on $[1 - \Delta, 1 + \Delta]$ for some $\Delta > 0$.
\end{As}
Assumption \ref{as:phi} is quite mild. In particular, the functions $\phi_\lambda$, $\lambda \in \R$, used in power divergence statistics, satisfy this assumption. 
\begin{Prop}
    \label{prop:power}
    Fix $\lambda \in \R$ and consider a function
    \begin{equation}
        \label{eq:power_div}
        \phi_\lambda(u) =
        \begin{cases}
            -\log u + u - 1, \quad \text{if $\lambda = -1$},\\
            u\log u - u + 1, \quad \text{if $\lambda = 0$},\\
            \frac{u^{\lambda + 1} - (\lambda + 1)(u - 1) - 1}{\lambda(\lambda + 1)}, \quad \text{otherwise}.
        \end{cases}
    \end{equation}
    Then $\phi_\lambda$ satisfies Assumption \ref{as:phi} with the following constants.
    \begin{itemize}
        \item[(a)] If $\lambda = 1$ or $\lambda = 2$, then $\phi_\lambda$ satisfies Assumption \ref{as:phi} with $\ls = 0$ and arbitrary $\Delta > 0$.
        \item[(b)] If $\lambda \geq 3$, then $\phi_\lambda$ satisfies Assumption \ref{as:phi} with $\ls = e (\lambda - 1)(\lambda - 2)$ and $\Delta = 1 / (\lambda - 2)$.
        \item[(c)] If $\lambda < 3$ and $\lambda \notin \{1, 2\}$, then $\phi_\lambda$ satisfies Assumption \ref{as:phi} with $\ls = e^2 |(\lambda - 1)(\lambda - 2)|$ and $\Delta = 1 / (5 - \lambda)$.
    \end{itemize}
\end{Prop}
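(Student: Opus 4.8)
The plan is to reduce everything to explicit differentiation, since on the positive half-line $\phi_\lambda$ is smooth and its derivatives obey the power rule uniformly in $\lambda$. First I would record that, for every real $\lambda$ and every $u > 0$,
\[
	\phi_\lambda'(u) = \frac{u^\lambda - 1}{\lambda}, \quad \phi_\lambda''(u) = u^{\lambda-1}, \quad \phi_\lambda'''(u) = (\lambda-1)u^{\lambda-2}, \quad \phi_\lambda''''(u) = (\lambda-1)(\lambda-2)u^{\lambda-3},
\]
where for $\lambda \in \{-1, 0\}$ these formulas follow either directly from the limiting expressions in \eqref{eq:power_div} or as the $\lambda$-limits of the generic ones (both give the same answer). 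Evaluating at $u = 1$ yields $\phi_\lambda(1) = \phi_\lambda'(1) = 0$ and $\phi_\lambda''(1) = 1 > 0$, so the differentiability part of Assumption \ref{as:phi} holds for all $\lambda$; it remains only to exhibit, in each of the three regimes, a Lipschitz constant for $\phi_\lambda'''$ on a symmetric interval around $1$.

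Since $\phi_\lambda'''$ is continuously differentiable on $(0,\infty)$, for any $\Delta$ with $1 - \Delta > 0$ its Lipschitz constant on $[1-\Delta, 1+\Delta]$ equals $\sup_{u \in [1-\Delta, 1+\Delta]} |\phi_\lambda''''(u)| = |(\lambda-1)(\lambda-2)| \cdot \sup_{u \in [1-\Delta, 1+\Delta]} u^{\lambda-3}$. This immediately disposes of case (a): for $\lambda = 1$ we have $\phi_\lambda''' \equiv 0$ and for $\lambda = 2$ we have $\phi_\lambda''' \equiv 1$, so $\phi_\lambda'''$ is constant and any $\Delta > 0$ works with $\ls = 0$. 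In the remaining cases the entire content is to bound $\sup_u u^{\lambda-3}$ by the right constant given the prescribed $\Delta$, and the monotonicity of $u \mapsto u^{\lambda-3}$ tells us where the supremum sits.

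For case (b), $\lambda \geq 3$ makes $u \mapsto u^{\lambda-3}$ nondecreasing, so the supremum is attained at $u = 1 + \Delta$; with $\Delta = 1/(\lambda-2)$ and $m := \lambda - 2 \geq 1$ this equals $(1 + 1/m)^{m-1} \leq (1 + 1/m)^m < e$, whence $\ls \leq e(\lambda-1)(\lambda-2)$. For case (c), $\lambda < 3$ makes $u \mapsto u^{\lambda-3}$ nonincreasing, so the supremum is at $u = 1 - \Delta$; with $\Delta = 1/(5-\lambda)$ one has $1 - \Delta = (4-\lambda)/(5-\lambda)$, so that, setting $k := 4 - \lambda > 1$,
\[
	(1-\Delta)^{\lambda-3} = \left( 1 + \frac{1}{4-\lambda} \right)^{3-\lambda} = \left( 1 + \frac1k \right)^{k-1} < e \leq e^2,
\]
and multiplying by $|(\lambda-1)(\lambda-2)|$ gives the claimed $\ls$ (in fact with room to spare). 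One should also check that the intervals remain inside the domain, which is precisely why $\Delta$ is taken below $1$: for $\lambda > 3$ one has $\Delta = 1/(\lambda-2) < 1$, for $\lambda = 3$ the factor $u^{\lambda-3} \equiv 1$ is harmless at the endpoint, and for $\lambda < 3$ one has $\Delta = 1/(5-\lambda) < 1/2$. The only genuine input is the elementary estimate $(1 + 1/m)^m < e$, and the point of the two prescribed values of $\Delta$ is exactly that they recast $(1 \pm \Delta)^{\lambda-3}$ as a power of the form $(1 + 1/m)^{m-1}$; I expect this bookkeeping — matching the exponent to $m-1$ and confirming $1 - \Delta > 0$ — to be the only place that demands a little care.
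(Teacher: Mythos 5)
Your proposal is correct and follows essentially the same route as the paper's own proof: compute $\phi_\lambda''''(u) = (\lambda-1)(\lambda-2)u^{\lambda-3}$, verify the values at $u=1$, use monotonicity of $u \mapsto u^{\lambda-3}$ to locate the supremum at an endpoint of $[1-\Delta, 1+\Delta]$, and bound that endpoint value by an elementary estimate. The only divergence is cosmetic: in case (c) the paper reaches $e^2$ via a Taylor expansion of $-\log(1-\Delta)$ and the bound $1/(1-\Delta) \leq e^{2\Delta}$, whereas your identity $(1-\Delta)^{\lambda-3} = \left(1 + \frac{1}{4-\lambda}\right)^{(4-\lambda)-1} < e$ is cleaner and even yields a sharper constant than the claimed $\ls = e^2|(\lambda-1)(\lambda-2)|$, which it therefore certainly verifies.
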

The proof of Proposition \ref{prop:power} is deferred to Appendix \ref{app:prop_power_proof}.
We are ready to formulate the main result of the present paper.
\begin{Th}
	\label{th:phi-div}
	Let Assumption \ref{as:phi} be satisfied and let $p_{\min} = \min\limits_{1 \leq j \leq r} p_j > 0$.
	Assume that $n$ satisfies
	\[
		5 |\phi'''(1)| \left( p_j(1 - p_j) + \log n \right) \leq 4 \phi''(1) \sqrt{n} p_j \quad \text{for all } j \in \{1, \dots, r\},
	\]
	\[
	    5\log n \leq 2 p_{\min} \Delta \sqrt n,
	\]
	and
	\[
		16r^3 + 16r^2 \log n \leq np_{\min}.
	\]
	Then, for any $\delta \in (0, 1)$, with probability at least $1 - \delta$ (over $\btheta$ uniformly distributed on the 	unit sphere $\cs^{n-1}$), it holds that
	\begin{equation}
	    \label{eq:rand_phi-div_bound}
		\sup\limits_{t \in \R} \left| \p\left( \ct_\phi > t \cond \right)  - \p\left( Z > t 
		\right) \right|
		\lesssim \left(\frac{\phi'''(1)}{\phi''(1)}\right)^2 \frac{r^{3/2} + (\log n)^{3/2}}{n p_{\min}} + \frac{C_{r-1} r \log^2(1 / \delta)}{n p_{\min}} + \frac{\ls \sqrt{r} (\log n)^4}{\phi''(1) n p_{\min}^3}.
	\end{equation}
	Here $Z \sim \chi^2(r-1)$ and $C_{r-1}$ is the same constant as in \eqref{eq:au20}. 
\end{Th}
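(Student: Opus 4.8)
The plan is to Taylor-expand the summands of $\ct_\phi$ around $1$, isolate the quadratic form to which Theorem~\ref{th:au20} applies, and show that the cubic and higher-order parts contribute only at order $1/n$. Since $\phi(1)=\phi'(1)=0$, substituting $u_j = 1 + X_j^\btheta/(\sqrt n\, p_j)$ into \eqref{eq:rand_phi-div} yields
\begin{equation*}
	\ct_\phi
	= \underbrace{\sum_{j=1}^r \frac{(X_j^\btheta)^2}{p_j}}_{=:\,Q}
	+ \frac{\phi'''(1)}{3\phi''(1)}\cdot\frac{1}{\sqrt n}\underbrace{\sum_{j=1}^r \frac{(X_j^\btheta)^3}{p_j^2}}_{=:\,S}
	+ \rho ,
\end{equation*}
where, by the $\ls$-Lipschitzness of $\phi'''$ on $[1-\Delta,1+\Delta]$, the remainder satisfies $|\rho| \lesssim \frac{\ls}{\phi''(1)}\frac1n\sum_j (X_j^\btheta)^4/p_j^3$ whenever all $u_j \in [1-\Delta,1+\Delta]$. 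I would handle $Q$, the cubic term, and $\rho$ separately, and transfer the closeness of $Q$ to $\chi^2(r-1)$ to closeness of $\ct_\phi$ through a smoothing/anti-concentration argument, using that $\chi^2(r-1)$ has a bounded density.

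For the quadratic term I would whiten and project. Writing $Q = \|D^{-1/2}\bx^\btheta\|^2$ with $D = \text{diag}(\bp)$, the vectors $\bu_i = D^{-1/2}\bxi_i$ obey $\E\bu_1 = \bzero$ and $\E\bu_1\bu_1^\top = I_r - \bq\bq^\top$ with $\bq = (\sqrt{p_1},\dots,\sqrt{p_r})^\top$, $\|\bq\|=1$; moreover $\bq^\top\bu_i = \bone^\top\bxi_i = 0$ almost surely, so the $\bu_i$ lie in the $(r-1)$-dimensional subspace orthogonal to $\bq$. Fixing an orthonormal basis $V\in\R^{r\times(r-1)}$ of that subspace and setting $\tilde\bu_i = V^\top\bu_i$, I obtain i.i.d.\ vectors in $\R^{r-1}$ with zero mean, identity covariance, and $\E\|\tilde\bu_1\|^4 \leq \E\|\bu_1\|^4 = \sum_{j=1}^r 1/p_j - 2r + 1 \lesssim r/p_{\min}$, while $Q = \|\sum_i \theta_i \tilde\bu_i\|^2$. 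Applying Theorem~\ref{th:au20} with $d = r-1$ to the Euclidean balls $B = \{\|\cdot\|^2 \leq t\}$ gives, on an event of probability at least $1-\delta$ over $\btheta$, the estimate $\sup_t |\p(Q \leq t \mid \btheta) - \p(Z \leq t)| \lesssim C_{r-1} r \log^2(1/\delta)/(n p_{\min})$, which is exactly the middle term of \eqref{eq:rand_phi-div_bound}.

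To control $\rho$ I would enlarge the good $\btheta$-event so that also $\max_i|\theta_i| \lesssim \sqrt{(\log n)/n}$, which holds with overwhelming probability for the uniform law on $\cs^{n-1}$. A Bernstein bound for $X_j^\btheta = \sum_i \theta_i \xi_{ij}$ conditional on $\btheta$ then shows that, with probability at least $1 - O(1/n)$ over the data, $|X_j^\btheta| \lesssim \log n$ for all $j$ simultaneously. The three hypotheses on $n$ ensure, on this event, that every $u_j$ stays in $[1-\Delta,1+\Delta]$, so $|\rho| \lesssim \frac{\ls}{\phi''(1)}\frac1n\sum_j (X_j^\btheta)^4/p_j^3 \lesssim \frac{\ls\sqrt r (\log n)^4}{\phi''(1)\, n\, p_{\min}^3}$; combined with the anti-concentration of $\chi^2(r-1)$ and the negligible $O(1/n)$ failure probability, this delivers the last term of \eqref{eq:rand_phi-div_bound}.

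The hard part is the cubic term. It has magnitude $O(1/\sqrt n)$, so the crude estimate $|\p(Q + \tfrac{\phi'''(1)}{3\phi''(1)}\tfrac{S}{\sqrt n} > t) - \p(Q > t)| \leq \p(|Q - t| \lesssim |S|/\sqrt n)$ only yields $O(1/\sqrt n)$ and a coefficient \emph{linear} in $\phi'''(1)/\phi''(1)$, whereas \eqref{eq:rand_phi-div_bound} demands $O(1/n)$ and the \emph{square} of that coefficient. The gain must come from a first-order cancellation, and its source is a parity argument: writing $D^{-1/2}\bx^\btheta = R\,\boldsymbol{\omega}$ in polar coordinates, the cubic form $S = R^3 \sum_j \omega_j^3/\sqrt{p_j}$ is \emph{odd} under the reflection $\bx^\btheta \mapsto -\bx^\btheta$, whereas $Q = R^2$ and any (smoothed) indicator $\1(Q > t)$ are \emph{even}. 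I would make this quantitative by smoothing $\1(\cdot > t)$ into $\psi_t$ with $\|\psi_t^{(k)}\|_\infty \lesssim \eps^{-k}$, expanding $\E\psi_t(\ct_\phi)$ to second order in the cubic term, and replacing the conditional law of $\bx^\btheta$ by its Gaussian limit in the first-order integral $\frac{\phi'''(1)}{3\phi''(1)}\frac1{\sqrt n}\E[\psi_t'(Q)\,S]$ via a smooth-function version of the weighted central limit theorem underlying Theorem~\ref{th:au20}; against the symmetric Gaussian this integral vanishes exactly by the parity above, leaving only a $O(1/n)$ residual. The surviving second-order contribution, of size $(\phi'''(1)/\phi''(1))^2\,\E[\psi_t''(Q)\,S^2]/n$, together with the bound $\E S^2 \lesssim r^{3/2}/p_{\min}$ on the typical scale and $|S| \lesssim (\log n)^{3/2} r/\sqrt{p_{\min}}$ on the truncation event, produces the first term $(\phi'''(1)/\phi''(1))^2 (r^{3/2} + (\log n)^{3/2})/(n p_{\min})$ after optimizing $\eps$. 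Summing the three contributions completes the proof. The most delicate point is precisely this step: ensuring the first-order cubic term is annihilated with an error genuinely of order $1/n$ rather than $1/\sqrt n$, which requires the smooth-function transfer to the Gaussian limit and careful tracking of how the asymmetry of the multinomial increments enters only at the next order.
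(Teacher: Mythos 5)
Your decomposition, the Bernstein-based control of the remainder, the whitening-and-projection treatment of the quadratic part via Theorem~\ref{th:au20}, and above all your identification of parity --- the cubic form is odd while $Q$ and indicators of $Q$ are even --- are all correct, and that parity cancellation is indeed the heart of the matter. The genuine gap is in how you propose to make it quantitative. You smooth $\1(\cdot>t)$ at scale $\eps$ and then want to move the first-order integral $\E\left[\psi_t'(Q)\,S\right]$ (and, implicitly, the second-order term $\E\left[\psi_t''(Q)\,S^2\right]$) from the conditional law of $\bx^\btheta$ to its Gaussian limit via ``a smooth-function version of the weighted CLT underlying Theorem~\ref{th:au20}.'' No such tool is available --- Theorem~\ref{th:au20} is a statement about indicators of convex sets --- and any smooth-function transfer bound inevitably scales with derivative norms of $\psi_t$, which are powers of $1/\eps$. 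Since the smoothing itself costs of order $\eps/\sqrt r$ (anti-concentration of $\chi^2(r-1)$), reaching the target $O(1/n)$ forces $\eps \asymp \sqrt r/n$, hence $\|\psi_t'\|_\infty \asymp n/\sqrt r$ and $\|\psi_t''\|_\infty \asymp n^2/r$: every transfer error, and even the crude bound $\|\psi_t''\|_\infty\,\E S^2/n$ on the second-order term, then diverges with $n$. The localization that rescues $\E\left[\psi_t''(Q)\,S^2\right]$ --- restricting to the window $|Q-t|\lesssim\eps$ and factoring out the angular part --- needs exactly the two properties that hold only for the Gaussian: independence of radius and direction, and a bounded density. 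So it cannot be run \emph{before} the transfer, and the transfer is precisely what is missing; optimizing $\eps$ with the bounds you actually have yields at best $O(n^{-1/2})$, not $O(1/n)$.

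The paper escapes this circle by never separating the cubic term from the quadratic one at the transfer stage. It keeps them together in $\cq(\mathbf x) = \sum_j x_j^2/p_j + \frac{\phi'''(1)}{3\phi''(1)}\sum_j x_j^3/(\sqrt n\, p_j^2)$ and observes that, under the first hypothesis on $n$, the Hessian of $\cq$ is positive definite on the truncation box $\left\{ |x_j| \leq \tfrac54\left(p_j(1-p_j)+\log n\right) \right\}$; the relevant level sets intersected with that box are therefore convex, and Theorem~\ref{th:au20} --- whose $O(1/n)$ rate is uniform over convex sets and involves no smoothness norms --- transfers $\p\left(\cq(\bx^\btheta)>t \cond\right)$ to $\p\left(\cq(\bxtilde)>t\right)$ in one stroke (Lemma~\ref{lem:gauss_approx}). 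Only then, entirely in the Gaussian world, is $\cq(\bxtilde)$ compared with $\|\bz\|^2 \sim \chi^2(r-1)$ (Lemma~\ref{lem:chi_squared_approx}): in polar coordinates the radius and direction are independent, and your parity observation reappears there as the vanishing of the odd moments of the symmetric direction vector, which kills the first-order term exactly and leaves the $(\phi'''(1)/\phi''(1))^2\,(r^{3/2}+(\log n)^{3/2})/(np_{\min})$ residual --- with no smoothing anywhere. To salvage your route you would need a new weighted CLT valid for non-convex sets or for functionals whose smoothness degrades with $n$, which is a substantially harder problem than the convexity repackaging the paper uses.
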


Proposition \ref{prop:power} allows us to specify the result of Theorem \ref{th:phi-div} for the case of power divergence test statistics.
\begin{Co}
    Let $p_{\min} = \min\limits_{1 \leq j \leq r} p_j > 0$. For any $\lambda \in \R$, define $\Delta_\lambda$ and $\ls_\lambda$ as follows:
    \[
        \Delta_\lambda
        = \begin{cases}
            +\infty, \quad \text{if $\lambda = 1$, or $\lambda = 2$}\\
            1 / (\lambda - 2), \quad \text{if $\lambda \geq 3$,}\\
            1/ (5 -\lambda), \quad \text{otherwise,}
        \end{cases}
        \qquad\text{and}\qquad
        \ls_\lambda
        = \begin{cases}
            0, \quad \text{if $\lambda = 1$, or $\lambda = 2$}\\
            e(\lambda - 1)(\lambda - 2), \quad \text{if $\lambda \geq 3$,}\\
            e^2|(\lambda - 1)(\lambda - 2)|, \quad \text{otherwise.}
        \end{cases}
    \]
    Assume that $n$ satisfies
	\[
		5 |\lambda - 1| \left( p_j(1 - p_j) + \log n \right) \leq 4 \sqrt{n} p_j \quad \text{for all } j \in \{1, \dots, r\},
	\]
	\[
	    5\log n \leq 2 p_{\min} \Delta_\lambda \sqrt n,
	\]
	and
	\[
		16r^3 + 16r^2 \log n \leq np_{\min}.
	\]
	Then, for any $\delta \in (0, 1)$, with probability at least $1 - \delta$ (over $\btheta$ uniformly distributed on the 	unit sphere $\cs^{n-1}$), it holds that
	\begin{equation*}
	    \sup\limits_{t \in \R} \left| \p\left( \ct_{\phi_\lambda} > t \cond \right)  - \p\left( Z > t \right) \right|
		\lesssim \frac{(\lambda - 1)^2 (r^{3/2} + (\log n)^{3/2})}{n p_{\min}} + \frac{C_{r-1} r \log^2(1 / \delta)}{n p_{\min}} + \frac{\ls_\lambda \sqrt{r} (\log n)^4}{n p_{\min}^3},
	\end{equation*}
	where $\phi_\lambda$ is defined in \eqref{eq:power_div}, $Z \sim \chi^2(r-1)$, and $C_{r-1}$ is the same constant as in \eqref{eq:au20}.
\end{Co}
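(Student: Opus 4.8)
The plan is to obtain the corollary as a direct specialization of Theorem \ref{th:phi-div} to the choice $\phi = \phi_\lambda$, with all the problem-specific work relegated to Proposition \ref{prop:power}. The only genuinely computational ingredient is the evaluation of the two derivatives of $\phi_\lambda$ that enter the bound of Theorem \ref{th:phi-div}, namely $\phi''(1)$ and $\phi'''(1)$.

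First I would compute these derivatives. For $\lambda \notin \{0, -1\}$, differentiating the closed form in \eqref{eq:power_div} gives $\phi_\lambda'(u) = (u^\lambda - 1)/\lambda$, hence $\phi_\lambda''(u) = u^{\lambda-1}$ and $\phi_\lambda'''(u) = (\lambda-1)u^{\lambda-2}$; evaluating at $u = 1$ yields $\phi_\lambda''(1) = 1$ and $\phi_\lambda'''(1) = \lambda - 1$. For the limiting cases $\lambda = 0$ and $\lambda = -1$ I would differentiate $u\log u - u + 1$ and $-\log u + u - 1$ directly and confirm that the same two identities persist. Consequently $\phi_\lambda'''(1)/\phi_\lambda''(1) = \lambda - 1$ and $\ls/\phi_\lambda''(1) = \ls_\lambda$ for every $\lambda \in \R$.

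Next I would invoke Proposition \ref{prop:power} to certify that $\phi_\lambda$ satisfies Assumption \ref{as:phi} with the Lipschitz constant $\ls_\lambda$ and radius $\Delta_\lambda$ recorded in the statement, splitting across the three cases $\lambda \in \{1,2\}$, $\lambda \geq 3$, and the remaining $\lambda$ exactly as that proposition does. With Assumption \ref{as:phi} in force, the three sample-size conditions of Theorem \ref{th:phi-div} reduce, upon substituting $\phi_\lambda''(1) = 1$, $\phi_\lambda'''(1) = \lambda - 1$, and $\Delta = \Delta_\lambda$, precisely to the three conditions imposed in the corollary, so the hypotheses match verbatim. Substituting $(\phi_\lambda'''(1)/\phi_\lambda''(1))^2 = (\lambda-1)^2$ and $\ls/\phi_\lambda''(1) = \ls_\lambda$ into the right-hand side of \eqref{eq:rand_phi-div_bound} then produces exactly the claimed bound.

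There is essentially no obstacle here: the entire analytic burden---the weighted-sum central limit theorem of Theorem \ref{th:au20}, the Taylor remainder control, and the accompanying concentration estimates---is already discharged inside Theorem \ref{th:phi-div} and Proposition \ref{prop:power}. The only point demanding a little care is the bookkeeping for the limiting values $\lambda = 0, -1$, where $\phi_\lambda$ is defined by passage to the limit; there one must confirm both that the derivative identities above still hold and that these two values are genuinely covered by Proposition \ref{prop:power}, which they are, since each satisfies $\lambda < 3$ and $\lambda \notin \{1,2\}$ and therefore falls into its case (c).
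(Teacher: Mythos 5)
Your proposal is correct and matches the paper's (implicit) argument: the corollary is obtained by specializing Theorem \ref{th:phi-div} to $\phi = \phi_\lambda$, using Proposition \ref{prop:power} to certify Assumption \ref{as:phi} with $(\ls_\lambda, \Delta_\lambda)$, and substituting $\phi_\lambda''(1) = 1$, $\phi_\lambda'''(1) = \lambda - 1$ (including the limiting cases $\lambda \in \{0, -1\}$) into the hypotheses and the bound \eqref{eq:rand_phi-div_bound}. Your handling of the edge case $\lambda \in \{1,2\}$, where $\Delta_\lambda = +\infty$ encodes the ``arbitrary $\Delta > 0$'' of Proposition \ref{prop:power}(a) and renders the second sample-size condition vacuous, is also consistent with the paper.
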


\begin{Rem}
In the case of randomized Pearson's statistic
\[
    \ct_{\phi_1} = \sum\limits_{j=1}^r \frac{(X_j^\btheta)^2}{p_j},
\]
the first and the last term in the right hand side of \eqref{eq:rand_phi-div_bound} vanish, and we obtain that, with probability at least $1 - \delta$,
\[
    \sup\limits_{t \in \R} \left| \p\left( \sum\limits_{j=1}^r \frac{(X_j^\btheta)^2}{p_j} > t \cond \right)  - \p\left( Z > t \right) \right|
	\lesssim \frac{C_{r-1} r \log^2(1 / \delta)}{n p_{\min}},
\]
where $Z \sim \chi^2(r-1)$. We conjecture that the $\log n$ factors can be removed in a general case. However, it may require a different technique.
\end{Rem}

It is worth mentioning that, in practice, a statistician often has an access to the aggregated data $\by$, rather than to the actual outcomes $\bfeta_1, \dots, \bfeta_n$. In this case, the statistic $\ct_\phi$ cannot be computed directly. We use the following trick to overcome this issue. Given $\by$, define $\widetilde\bfeta_1 = \widetilde\bfeta_2 = \ldots = \widetilde\bfeta_{Y_1} = (1, 0, \dots, 0)^\top$, $\widetilde\bfeta_{Y_1 + 1} = \widetilde\bfeta_{Y_1 + 2} = \ldots = \widetilde\bfeta_{Y_1 + Y_2} = (0, 1, 0, \dots, 0)^\top$, etc. Obviously, there exists a permutation $\bpi = (\pi(1), \dots, \pi(n))^\top$ (depending on $\by$), such that $\widetilde\bfeta_i = \bfeta_{\pi(i)}$ for all $i \in \{1, \dots, n\}$. Let $\btheta$ be uniformly distributed on the unit sphere $\cs^{n-1}$,
define
\[
    \smash{\widetilde\bx}^\btheta
    = \sum\limits_{i=1}^n \theta_i \left(\widetilde\bfeta_i - \bp \right)
\]
and compute a randomized phi-divergence test statistic based on $\widetilde\bfeta_1, \dots, \widetilde\bfeta_n$ (which requires the knowledge of $\by$ only):
\[
	\widetilde\ct_\phi = \frac{2n}{\phi''(1)} \sum\limits_{j=1}^r p_j \phi\left( 1 + 
	\frac{\smash{\widetilde X}_j^\btheta}{\sqrt{n} 
	p_j} \right).
\]
Note that, conditionally on $\by$,
\[
    \left( \sum\limits_{i=1}^n \theta_i \left(\widetilde\bfeta_i - \bp \right) \,\Big\vert\, \by \right)
    = \left( \sum\limits_{i=1}^n \theta_i \left(\bfeta_{\pi(i)} - \bp \right) \,\Big\vert\, \by \right)
    = \left( \sum\limits_{i=1}^n \theta_{\pi^{-1}(i)} \left(\bfeta_i - \bp \right) \,\Big\vert\, \by \right)
    \eqd \left( \sum\limits_{i=1}^n \theta_i \left(\bfeta_i - \bp \right) \,\Big\vert\, \by \right),
\]
where $\bpi^{-1}$ is an inverse permutation for $\bpi$, and the sign ``$\eqd$'' stands for the equality in distribution. Equality of the conditional distributions yields
\[
    \sum\limits_{i=1}^n \theta_i \left(\widetilde\bfeta_i - \bp \right)
    \eqd \sum\limits_{i=1}^n \theta_i \left(\bfeta_i - \bp \right),
\]
and, hence, $\widetilde\ct_\phi$ has the same distribution as $\ct_\phi$ given by \eqref{eq:rand_phi-div}.

We defer the proof of Theorem \ref{th:phi-div} to the next section and split it into several steps for the sake of readability. The proof goes as follows. Using Taylor's expansion, it is easy to show that the randomized test statistic $\ct_\phi$ can be represented in the form
\[
	\ct_\phi
	= \sum\limits_{j=1}^r \frac{(X_j^\btheta)^2}{p_j} + \frac{\phi'''(1)}{3\phi''(1)} \sum\limits_{j=1}^r \frac{(X_j^\btheta)^3}{\sqrt n p_j^2} + \rr,
\]
where $\rr$ is a term of order $O((\log n)^4 / n)$.
The main ingredient of the proof is a careful analysis of
\[
    \sum\limits_{j=1}^r \frac{(X_j^\btheta)^2}{p_j} + \frac{\phi'''(1)}{3\phi''(1)} \sum\limits_{j=1}^r \frac{(X_j^\btheta)^3}{\sqrt n p_j^2}
\]
with cubic terms of order $O(n^{-1/2})$, which did not appear when we considered the randomized rank-based statistic $\ct_\J$.
Though it is clear that, due to Theorem \ref{th:au20}, the Kolmogorov distance between the law of the quadratic part $\sum_{j=1}^r (X_j^\btheta)^2 / p_j$ and $\chi^2(r-1)$ decays as fast as $O(1/n)$, it is a challenge to show that addition of the cubic summands of order $O(n^{-1/2})$ does not contaminate this rate of convergence. We address this issue in Lemma \ref{lem:gauss_approx} and Lemma \ref{lem:chi_squared_approx} below.

\section{Proofs}
\label{sec:proofs}

This section contains proofs of Theorem \ref{th:rankings} and Theorem \ref{th:phi-div}.

\subsection{Proof of Theorem \ref{th:rankings}}
\label{sec:proof_rankings}

\noindent{\bf Step 1.}\quad
In \cite[p. 314]{sen68}, the author claims that
\[
	\E \bV_1^\J = \overline\J \bone,
	\qquad
	\text{Var}(\bV_1^\J) = \sigma^2_\J I_r - \frac{\sigma^2_\J}{r} \bone \bone^\top \equiv \Sigma_\J. 
\]
Let us take a closer look at the covariance matrix $\Sigma_\J$. It is easy to check that $\Sigma_\J \bone = \bzero$ and that
\[
	\Sigma_\J \bu = \sigma^2_\J \bu
\]
for any vector $\bu$ which is orthogonal to $\bone$. Hence, $\Sigma_\J$ has one eigenvector corresponding to $0$ and $(r-1)$ orthogonal eigenvectors corresponding to $\sigma^2_\J$. This yields nice properties of the Moore-Penrose pseudoinverse $\Sigma_\J^\dag$ of the matrix $\Sigma_\J$. Namely, $\Sigma_\J^\dag \bone = \bzero$ and, for any vector $\bu$ such that $\bu^\top \bone = 0$, we have
\[
	\Sigma_\J^\dag \bu = \frac{1}{\sigma^2_\J} \bu.
\]
Taking into account that $\bone^\top \bV_i^\J = r\overline\J$ for all $i$ from $1$ to $n$ and, therefore,
\[
	\left( \bV_i^\J - \overline\J \bone\right)^\top \bone = 0
	\quad \text{almost surely,}
\]
we obtain that $\F$ can be rewritten in the following form:
\[
		\ct_\J = \left\| \sum\limits_{i=1}^n \theta_i \left(\Sigma_\J^\dag\right)^{1/2} \left( \bV_i^\J - \E\bV_i^\J \right) \right\|^2.
\]
Since $\bV_1^\J - \E\bV_1^\J, \dots, \bV_n^\J - \E\bV_n^\J$ are orthogonal to $\bone$ almost surely, we restrict our attention on the orthogonal complement of $\bone$. Applying Theorem \ref{th:au20}, we get
\begin{align*}
	&
	\sup\limits_{t \in \R} \left| \p\left( \ct_\J > t \cond \right)  - \p\left( Z > t \right) \right|
	\\&
	= \sup\limits_{t \in \R} \left| \p\left( \sum\limits_{i=1}^n \theta_i \left(\Sigma_\J^\dag\right)^{1/2} \left( \bV_i^\J - \E\bV_i^\J \right) \in \B(0, t) \cond \right)  - \p\left( \widetilde\bV \in \B(0, t) \right) \right|
	\\&
	\leq \frac{C_{r-1} \E \left( \left( \bV_1^\J - \E\bV_1^\J \right)^\top \Sigma_\J^\dag \left( \bV_1^\J - \E\bV_1^\J \right) \right)^2 \log^2(1 / \delta)}{n},
\end{align*}
where $Z \sim \chi^2(r-1)$, $\widetilde\bV \sim \N(\bzero, I_{r-1})$.

\noindent{\bf Step 2.}\quad
It remains to bound the expectation
\[
	\E \left( \left( \bV_1^\J - \E\bV_1^\J \right)^\top \Sigma_\J^\dag \left( \bV_1^\J - \E\bV_1^\J \right) \right)^2
\]
to finish the proof of the theorem.

\begin{Lem}
	\label{lem:rankings_moment}
	With the notations introduced above, it holds that
	\[
		\E \left( \left( \bV_1^\J - \E\bV_1^\J \right)^\top \Sigma_\J^\dag \left( \bV_1^\J - \E\bV_1^\J \right) \right)^2
		\leq \frac{r^2 B^2}{\sigma_\J^2}.
	\]
\end{Lem}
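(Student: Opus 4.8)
The plan is to exploit the fact that the quadratic form inside the expectation is in fact a deterministic quantity, so that taking $\E(\cdot)^2$ collapses to evaluating a constant. Write $\bW = \bV_1^\J - \E\bV_1^\J = \bV_1^\J - \overline\J\bone$ for brevity. The first step I would carry out is to record that $\bW$ is almost surely orthogonal to $\bone$: since $\bpi_1$ is a permutation of $\{1,\dots,r\}$, the quantity $\bone^\top \bV_1^\J = \sum_{k=1}^r \J(\pi_{1k}) = \sum_{m=1}^r \J(m) = r\overline\J$ is non-random, whence $\bone^\top \bW = 0$ almost surely.

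Next I would invoke the spectral description of $\Sigma_\J^\dag$ established just above the lemma, namely that $\Sigma_\J^\dag$ acts as multiplication by $1/\sigma_\J^2$ on the orthogonal complement of $\bone$. Applied to $\bW$, this yields $\Sigma_\J^\dag \bW = \sigma_\J^{-2}\bW$ and therefore
\[
    \bW^\top \Sigma_\J^\dag \bW = \frac{\|\bW\|^2}{\sigma_\J^2}
    = \frac{1}{\sigma_\J^2}\sum_{k=1}^r \left(\J(\pi_{1k}) - \overline\J\right)^2.
\]
The crucial point is that, because $\bpi_1$ merely permutes the arguments of $\J$, the sum on the right equals $\sum_{m=1}^r (\J(m) - \overline\J)^2 = (r-1)\sigma_\J^2$ by the definition of $\sigma_\J^2$ in \eqref{eq:overline_j}. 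Hence $\bW^\top \Sigma_\J^\dag \bW = r-1$ almost surely, and consequently $\E\big(\bW^\top \Sigma_\J^\dag \bW\big)^2 = (r-1)^2$ with no averaging actually required.

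It then remains only to compare $(r-1)^2$ with the claimed bound. Using the boundedness assumption $|\J(k) - \overline\J| \leq B$, I would estimate $(r-1)\sigma_\J^2 = \sum_{k=1}^r (\J(k)-\overline\J)^2 \leq rB^2$, so that $\sigma_\J^2 \leq rB^2/(r-1)$, equivalently $1/\sigma_\J^2 \geq (r-1)/(rB^2)$. Multiplying through by $r^2 B^2$ gives $r^2 B^2/\sigma_\J^2 \geq r(r-1) \geq (r-1)^2$, which is precisely the desired inequality. In this argument there is no genuine obstacle: the entire content lies in the observation that the pseudoinverse quadratic form is pathwise constant, after which the estimate is a one-line consequence of the $L_\infty$ bound on $\J - \overline\J$, and the stated bound is in fact somewhat loose.
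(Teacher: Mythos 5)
Your proof is correct, but it takes a genuinely different route from the paper's, and in fact a sharper one. The paper never notices that the quadratic form is deterministic: it bounds $\bigl(\bV_1^\J - \E\bV_1^\J\bigr)^\top \Sigma_\J^\dag \bigl(\bV_1^\J - \E\bV_1^\J\bigr) = \sigma_\J^{-2}\|\bV_1^\J - \E\bV_1^\J\|^2$ just as you do, but then estimates the fourth moment via the componentwise bound $|\J(k)-\overline\J|\leq B$, namely $\E\|\bV_1^\J - \E\bV_1^\J\|^4 \leq rB^2\,\E\|\bV_1^\J - \E\bV_1^\J\|^2 = rB^2\,\mathrm{Tr}(\Sigma_\J) \leq r^2B^2\sigma_\J^2$, which yields exactly the stated bound $r^2B^2/\sigma_\J^2$. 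Your key observation—that $\|\bV_1^\J - \overline\J\bone\|^2 = \sum_{k=1}^r (\J(\pi_{1k})-\overline\J)^2 = \sum_{m=1}^r (\J(m)-\overline\J)^2 = (r-1)\sigma_\J^2$ almost surely, because a permutation merely reorders the summands—collapses the expectation to the exact value $(r-1)^2$, and your final comparison $(r-1)^2 \leq r(r-1) \leq r^2B^2/\sigma_\J^2$ (via $(r-1)\sigma_\J^2 \leq rB^2$) is valid. What your approach buys is nontrivial: it shows the lemma's bound is loose and that the fourth-moment quantity fed into Theorem \ref{th:au20} equals $(r-1)^2$ identically, with no dependence on $B$ or $\sigma_\J$; consequently the boundedness hypothesis \eqref{eq:j_boundness} is superfluous for this step, and the bound \eqref{eq:rankings_bound} in Theorem \ref{th:rankings} could be stated as $C_{r-1}(r-1)^2\log^2(1/\delta)/n$. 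What the paper's argument buys, by contrast, is robustness: it only uses the $L_\infty$ bound on the components and would survive in settings where the summands are not an exact permutation of a fixed set, whereas your argument leans on that combinatorial invariance.
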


\begin{proof}[Proof of Lemma \ref{lem:rankings_moment}]
	A reader can recall that
	\[
		\Sigma_\J^\dag \bu = \frac{1}{\sigma_\J^2} \bu
	\]
	for any vector $\bu$ which is orthogonal to $\bone$.
	Thus,
	\[
		\left( \bV_1^\J - \E\bV_1^\J \right)^\top \Sigma_\J^\dag \left( \bV_1^\J - \E\bV_1^\J \right)
		= \frac{1}{\sigma_\J^2} \left\| \bV_1^\J - \E\bV_1^\J \right\|^2.
	\]
	Then
	\[
		\E \left( \left( \bV_1^\J - \E\bV_1^\J \right)^\top \Sigma_\J^\dag \left( \bV_1^\J - \E\bV_1^\J \right) \right)^2
		= \frac{1}{\sigma_\J^4} \E \left\| \bV_1^\J - \E\bV_1^\J \right\|^4.
	\]
	Finally, using the fact that the absolute value of each component of $\bV_1^\J - \E\bV_1^\J$ does not exceed $B$ almost surely, we obtain
	\begin{align*}
		\E \left\| \bV_1^\J - \E\bV_1^\J \right\|^4
		\leq r B^2 \E \left\| \bV_1^\J - \E\bV_1^\J \right\|^2
		= r B^2 \text{Tr}(\Sigma_\J)
		= r^2 B^2 \sigma_\J^2.
	\end{align*}
	Hence,
	\begin{equation}
		\label{eq:moment_bound}
		\E \left( \left( \bV_1^\J - \E\bV_1^\J \right)^\top \Sigma_\J^\dag \left( \bV_1^\J - \E\bV_1^\J \right) \right)^2
		\leq \frac{r^2 B^2}{\sigma_\J^2},
	\end{equation}
	and the proof is finished.

\end{proof}

\subsection{Proof of Theorem \ref{th:phi-div}}
\label{sec:proof_phi-div}

\noindent{\bf Step 1.}\quad
Taylor's expansion yields
\begin{align}
	\label{eq:phi-div_taylor}
	\phi\left(1 + \frac{X_j^\btheta}{\sqrt n p_j} \right)
	= \frac{\phi''(1) (X_j^\btheta)^2}{2 n p_j^2}
	+ \frac1{2} \int\limits_0^1 \phi''' \left(1 + \frac{v X_j^\btheta}{\sqrt n p_j}\right) \frac{(X_j^\btheta)^3}{n\sqrt n p_j^3}(1 - v)^2 \dd v.
\end{align}
Then one may rewrite the randomized test statistic \eqref{eq:rand_phi-div} in the following form:
\begin{align*}
	\ct_\phi
	&
	= \underbrace{\sum\limits_{j=1}^r \frac{(X_j^\btheta)^2}{p_j} + \frac{\phi'''(1)}{3\phi''(1)} \sum\limits_{j=1}^r 			\frac{(X_j^\btheta)^3}{\sqrt n p_j^2}}_{\cq({\bx^\btheta})}
	\\&\quad
	+ \underbrace{\frac1{\phi''(1)} \sum\limits_{j=1}^r \int\limits_0^1 
	\left[ \phi''' \left(1 + \frac{v X_j^\btheta}{\sqrt n p_j}\right) - \phi'''(1) \right] \frac{(X_j^\btheta)^3}{\sqrt n p_j^2} (1 - v)^2 \dd v}_{\rr}.
\end{align*}
Here we introduced a function $\cq : \R^r \rightarrow \R$
\begin{equation}
	\label{eq:q}
	\cq({\bf x}) = \sum\limits_{j=1}^r \frac{x_j^2}{p_j}
	+ \frac{\phi'''(1)}{3\phi''(1)} \sum\limits_{j=1}^r \frac{x_j^3}{\sqrt n p_j^2}.
\end{equation}

\medskip

\noindent{\bf Step 2.}\quad
We continue with an upper bound on the absolute value of the remainder term $\rr$. Recall that, due to Assumption \ref{as:phi}, the third derivative of $\phi$ is $\ls$-Lipschitz on $[1 - \Delta, 1 + \Delta]$.
\begin{Lem}
	\label{lem:r}
	There is an event $E_1$, $\p(E_1 \,\vert\, \btheta) \geq 1 - 2r/n$, such that, conditionally on $\btheta$, 
	\[
		\left|X_j^\btheta \right|
		\leq \frac{5p_j(1 - p_j)}4 + \frac{5\log n}4
	\]
	simultaneously for all $j \in \{1, \dots, r\}$ on $E_1$.
	Furthermore, on this event, one has
	\[
		\left|\rr \right| \leq \frac{2\ls}{\phi''(1) n} + \frac{2\ls r (\log n)^4}{\phi''(1) n p_{\min}^3}.
	\]
\end{Lem}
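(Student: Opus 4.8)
The statement has two parts: a high-probability bound on the coordinates $|X_j^\btheta|$, and the resulting bound on $|\rr|$. The plan is to first establish the coordinate bound via a conditional concentration argument (conditioning on $\btheta$, so that $\bx^\btheta = \sum_i \theta_i \bxi_i$ is a weighted sum of independent bounded random vectors), then feed the coordinate bound into the integral definition of $\rr$ together with the $\ls$-Lipschitz property of $\phi'''$.

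\textbf{Step 1: the coordinate bound.} Fix $\btheta$ on the sphere, so $\sum_i \theta_i^2 = 1$. Each coordinate is $X_j^\btheta = \sum_{i=1}^n \theta_i \xi_{ij}$, where $\xi_{ij} = \eta_{ij} - p_j$ is centered, bounded in $[-p_j, 1-p_j]$, with variance $p_j(1-p_j)$. Since the $\bxi_i$ are independent, $X_j^\btheta$ is a sum of independent centered summands $\theta_i \xi_{ij}$, each bounded by $|\theta_i|$ in absolute value and with conditional variance $\theta_i^2 p_j(1-p_j)$. The total variance is $\sum_i \theta_i^2 p_j(1-p_j) = p_j(1-p_j)$. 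I would apply Bernstein's inequality: for each $j$,
\[
    \p\Bigl( |X_j^\btheta| > u \,\Big\vert\, \btheta \Bigr)
    \leq 2 \exp\!\left( -\frac{u^2/2}{p_j(1-p_j) + \max_i|\theta_i|\, u/3} \right).
\]
The subtlety is controlling $\max_i |\theta_i|$: for $\btheta$ uniform on $\cs^{n-1}$ this is $O(\sqrt{(\log n)/n})$ with high probability, but one can avoid a second randomness layer by crudely bounding $\max_i|\theta_i| \leq 1$, which still makes the linear-in-$u$ term harmless once $u \gtrsim \log n$. Choosing $u = \tfrac{5}{4}(p_j(1-p_j) + \log n)$ and checking that this forces the exponent below $-\log n$ gives each tail probability at most $2/n^2$; a union bound over $j \in \{1,\dots,r\}$ then yields the event $E_1$ with $\p(E_1 \mid \btheta) \geq 1 - 2r/n$. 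The factor $5/4$ and the additive split into variance and $\log n$ terms are precisely what a careful tuning of the Bernstein exponent produces.

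\textbf{Step 2: bounding $\rr$.} On $E_1$, I first check that the arguments $1 + vX_j^\btheta/(\sqrt n p_j)$ stay inside $[1-\Delta, 1+\Delta]$, so that the Lipschitz bound on $\phi'''$ applies; this is where the hypothesis $5\log n \leq 2p_{\min}\Delta\sqrt n$ enters, guaranteeing $|X_j^\btheta|/(\sqrt n p_j) \leq \Delta$. The $\ls$-Lipschitz property gives, for each $j$ and each $v \in [0,1]$,
\[
    \left| \phi'''\!\left(1 + \frac{vX_j^\btheta}{\sqrt n p_j}\right) - \phi'''(1) \right|
    \leq \ls \frac{v |X_j^\btheta|}{\sqrt n p_j}.
\]
Plugging this into the definition of $\rr$, using $\int_0^1 v(1-v)^2\,\dd v = 1/12$, and bounding one factor of $|X_j^\btheta|$ by the coordinate bound from Step 1 while summing over $j$, produces terms of the form $\ls |X_j^\btheta|^4/(n p_j^3)$. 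Substituting the bound $|X_j^\btheta| \lesssim p_j(1-p_j) + \log n \lesssim \log n$ (valid for large $n$) and using $p_j \geq p_{\min}$ yields the two-term estimate, with the $(\log n)^4$ and $p_{\min}^{-3}$ factors arising from the fourth power of the coordinate bound and the three factors of $p_j$ in the denominator.

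\textbf{Main obstacle.} The delicate point is the Bernstein tuning in Step 1: one must confirm that with the specific constant $5/4$ and the additive form of the bound, the exponent genuinely drops below $-\log n$ uniformly in $j$ despite the potentially large $\max_i|\theta_i|$ term. Getting clean constants here (rather than merely an $O(\cdot)$ statement) is what makes the downstream arithmetic in Theorem \ref{th:phi-div} go through, so the bookkeeping of constants — not any deep idea — is where the real work lies.
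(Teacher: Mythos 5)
Your proposal is correct and follows essentially the same route as the paper's proof: conditional Bernstein's inequality with the crude summand bound $\max_i|\theta_i| \leq 1$ plus a union bound over $j$ to get $E_1$, then the hypothesis $5\log n \leq 2p_{\min}\Delta\sqrt{n}$ to keep the argument of $\phi'''$ inside $[1-\Delta, 1+\Delta]$, the $\ls$-Lipschitz bound, the computation $\int_0^1 v(1-v)^2\,\dd v = 1/12$, and substitution of the coordinate bound with $p_j \geq p_{\min}$ (the paper finishes the constant bookkeeping via $(a+b)^4 \leq 8(a^4+b^4)$ and $\sum_j p_j = 1$). One minor slip: with your choice $u = \tfrac54(p_j(1-p_j)+\log n)$ and $M=1$ the Bernstein exponent drops below $-\log n$ but not $-2\log n$, so each per-coordinate tail is $2/n$ rather than $2/n^2$; the union bound over $r$ coordinates then gives exactly the required $2r/n$.
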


\begin{proof}[Proof of Lemma \ref{lem:r}.]
Bernstein's inequality and the union bound yield that there is an event $E_1$ such that
$\p(E_1 \,\vert\, \btheta) \geq 1 - 2r/n$ and, conditionally on $\btheta$, 
\begin{align*}
	\left|X_j^\btheta \right|
	&
	\leq \sqrt{2\text{Var}(X_j^\theta) \log n} + \frac{2\log n}3
	\\&
	= \sqrt{2p_j(1 - p_j) \log n} + \frac{2\log n}3
	\\&
	\leq \frac{5p_j(1 - p_j)}4 + \frac{5\log n}4
\end{align*}
simultaneously for all $j \in \{1, \dots, r\}$ on $E_1$.
According to Assumption \ref{as:phi}, $\phi'''$ is Lipschitz in a vicinity of $1$.
Since on $E_1$
\[
    \frac{\left|X_j^\btheta \right|}{\sqrt n p_j}
    \leq \frac5{4\sqrt n} \left(1 - p_j + \frac{\log n}{p_j} \right)
    \leq \frac5{4\sqrt n} \left(1 + \frac{\log n}{p_{\min}} \right)
    \leq \frac{5\log n}{2 p_{\min} \sqrt n}
    \leq \Delta
\]
for all $j \in \{1, \dots, r\}$, the absolute value of the remainder term $\rr$ does not exceed
\begin{align*}
	\left|\rr \right|
	&
	\leq \frac1{\phi''(1)} \sum\limits_{j=1}^r \int\limits_0^1 \left| \phi''' \left(1 + \frac{v X_j^\btheta}{\sqrt n p_j}\right) - \phi'''(1) \right| \frac{|X_j^\btheta|^3}{\sqrt n p_j^2} (1 - v)^2 \dd v
	\\&
	\leq \frac{\ls}{\phi''(1)} \sum\limits_{j=1}^r \frac{(X_j^\btheta)^4}{n p_j^3} 
	\int\limits_0^1 v (1 - v)^2 dv
	= \frac{\ls}{12 \phi''(1)} \sum\limits_{j=1}^r \frac{(X_j^\btheta)^4}{n p_j^3}.
\end{align*}
Furthermore, the following holds on $E_1$:
\begin{align*}
	\left|\rr \right|
	&
	\leq \frac{\ls}{12 \phi''(1)} \sum\limits_{j=1}^r \frac{5^4(p_j(1 - p_j) + 
	\log n)^4}{4^4 n p_j^3}
	\\&
	\leq \frac{\ls}{12 \phi''(1)} \sum\limits_{j=1}^r \frac{5^4}{32 n p_j^3} \left( p_j^4 (1 - p_j)^4 + (\log n)^4 \right).
\end{align*}
Here we used the inequality $(a + b)^4 \leq 8(a^4 + b^4)$ which holds for any $a, b \geq 
0$.
Hence, we have
\[
    \left|\rr \right|
	\leq \frac{2\ls}{\phi''(1) n} \sum\limits_{j=1}^r \frac{p_j^4 + (\log n)^4}{p_j^3}
	\leq \frac{2\ls}{\phi''(1) n} + \frac{2\ls r (\log n)^4}{\phi''(1) n p_{\min}^3}.
\]
\end{proof}

\medskip

\noindent{\bf Step 3.}\quad
Let us fix $t \in \R$ and consider $\p\left( \ct_\phi > t \,\vert\, \btheta \right)$.
The next lemma shows that the probability $\p\left( \ct_\phi > t \,\vert\, \btheta \right)$ changes not too 
much if one replaces $\ct_\phi$ by a polynomial of a Gaussian 
random vector.

\begin{Lem}
	\label{lem:gauss_approx}
	Assume that the sample size $n$ satisfies
	\[
		\frac{5 |\phi'''(1)|}4 \left( p_j(1 - p_j) + \log n \right) \leq \phi''(1) \sqrt{n} p_j
	\]
	for all $j \in \{1, \dots, r\}$.
	Let $\bxtilde \sim \N(0, \Sigma)$ be a Gaussian random vector in $\R^r$ with $\Sigma$ defined in \eqref{eq:sigma}. For any $t \in \R$, it holds that
	\begin{align*}
		\p\left( \ct_\phi > t \cond \right)
		&
		\geq \p\left( \cq(\bxtilde) > t + \frac{2\ls}{\phi''(1) n} + \frac{2\ls r (\log n)^4}{\phi''(1) n p_{\min}^3} \right) - \frac{2r}n
		\\&\quad
		- \frac{C_{r-1} \E ((\bfeta_1 - \bp)^\top \Sigma^\dag (\bfeta_1 - \bp))^2 \log^2(1 / \delta)}{n} 
	\end{align*}
	and
	\begin{align*}
		\p\left( \ct_\phi > t \cond \right)
		&
		\leq \p\left( \cq(\bxtilde) > t - \frac{2\ls}{\phi''(1) n} - \frac{2\ls r (\log n)^4}{\phi''(1) n p_{\min}^3} \right) + \frac{2r}n
		\\&\quad
		+ \frac{C_{r-1} \E ((\bfeta_1 - \bp)^\top \Sigma^\dag (\bfeta_1 - \bp))^2 \log^2(1 / \delta)}{n},
	\end{align*}
	where $\cq$ is given by \eqref{eq:q}.
\end{Lem}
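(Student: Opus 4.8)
The plan is to move from $\ct_\phi$ to the polynomial $\cq(\bxtilde)$ of a Gaussian vector in two steps: first replace $\ct_\phi$ by $\cq(\bx^\btheta)$ using the control of the remainder $\rr$ from Lemma~\ref{lem:r}, and then replace the weighted sum $\bx^\btheta$ by $\bxtilde\sim\N(\bzero,\Sigma)$ via Theorem~\ref{th:au20}. For the first step I work on the event $E_1$ of Lemma~\ref{lem:r}, on which $|\rr|\le\rho$ with $\rho=\tfrac{2\ls}{\phi''(1)n}+\tfrac{2\ls r(\log n)^4}{\phi''(1)np_{\min}^3}$. There $\{\ct_\phi>t\}\cap E_1\subseteq\{\cq(\bx^\btheta)>t-\rho\}$ and, conversely, $\{\cq(\bx^\btheta)>t+\rho\}\cap E_1\subseteq\{\ct_\phi>t\}$; since $\p(E_1^c\mid\btheta)\le 2r/n$, this produces the threshold shift by $\pm\rho$ and the additive $2r/n$ in the claim, reducing matters to comparing $\p(\cq(\bx^\btheta)>s\mid\btheta)$ with $\p(\cq(\bxtilde)>s)$.

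The obstacle is that $\cq$ is not quadratic: its cubic part makes the superlevel set $\{\cq>s\}$ nonconvex, so Theorem~\ref{th:au20}, which compares $\bx^\btheta$ and $\bxtilde$ only on convex Borel sets, cannot be applied to it directly. The observation that removes this obstacle is that the sample-size hypothesis of the lemma is exactly a convexity condition for $\cq$ on the box
\[
    \cp=\Bigl\{\bx\in\R^r:\ |X_j|\le\tfrac{5p_j(1-p_j)}4+\tfrac{5\log n}4\ \text{ for all } j\Bigr\}
\]
to which $E_1$ confines $\bx^\btheta$. Indeed $\cq$ is separable, so its Hessian is diagonal with entries $\tfrac2{p_j}+\tfrac{2\phi'''(1)X_j}{\phi''(1)\sqrt n\,p_j^2}$, and on $\cp$ every entry is nonnegative precisely when $\tfrac{5|\phi'''(1)|}4\bigl(p_j(1-p_j)+\log n\bigr)\le\phi''(1)\sqrt n\,p_j$, i.e.\ under the stated assumption. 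Hence $\cq$ is convex on the convex set $\cp$, and for every $s$ the sublevel set $K_s=\{\cq\le s\}\cap\cp$ is convex.

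It then suffices to apply Theorem~\ref{th:au20} to $K_s$. After whitening, i.e.\ passing to the isotropic i.i.d.\ vectors $(\Sigma^\dag)^{1/2}(\bfeta_i-\bp)$ on the $(r-1)$-dimensional subspace $\bone^\perp$ that supports both $\bx^\btheta$ and $\bxtilde$, exactly as in Step~1 of the proof of Theorem~\ref{th:rankings}, Theorem~\ref{th:au20} yields for every convex $B\subseteq\bone^\perp$
\[
    \bigl|\p(\bx^\btheta\in B\mid\btheta)-\p(\bxtilde\in B)\bigr|\le\frac{C_{r-1}\,\E\bigl((\bfeta_1-\bp)^\top\Sigma^\dag(\bfeta_1-\bp)\bigr)^2\log^2(1/\delta)}{n}=:\eps_n,
\]
and, by complementation, the same bound holds for $B^c$. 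Taking $B=K_s$ and using $\{\cq>s\}\cap\cp\subseteq K_s^c$, $\p(\bx^\btheta\in\cp^c\mid\btheta)\le 2r/n$, and $\p(\bxtilde\in K_s^c)\le\p(\cq(\bxtilde)>s)+\p(\bxtilde\in\cp^c)$, I obtain both inequalities of the lemma. The leftover Gaussian mass $\p(\bxtilde\in\cp^c)$ is negligible: each half-width of $\cp$ exceeds $\tfrac54\log n$ while $\text{Var}(\widetilde X_j)=p_j(1-p_j)\le\tfrac14$, so a union bound over the $r$ coordinates makes it super-polynomially small in $n$.

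The crux, and the only real obstacle, is precisely this nonconvexity of the superlevel sets of $\cq$; once convexity of $\cq$ on the box $\cp$ is read off from the sample-size hypothesis, the whole comparison collapses to a single application of Theorem~\ref{th:au20} on a convex set and its complement. The remainder is bookkeeping: tracking $E_1$, the shift $\rho$, and the negligible Gaussian tail so that only the stated $2r/n$ and $\eps_n$ survive, with the symmetric lower bound obtained by the same argument applied to $K_{t+\rho}$.
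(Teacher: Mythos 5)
Your proposal is correct, and it follows the paper's own architecture in all but one step: the same reduction via the event $E_1$ and the threshold shift $\pm\rho$ (your notation for the bound on $|\rr|$) from Lemma \ref{lem:r}, the same whitening of the vectors $\bfeta_i-\bp$ on the subspace orthogonal to $\bone$ before invoking Theorem \ref{th:au20}, and the same Gaussian tail estimate for the confining box. The one place where you deviate is precisely the place where the paper's argument is defective, and your version is the rigorous one. The paper asserts that, under the sample-size hypothesis, the set $\{\mathbf{x}\in\R^r : \cq(\mathbf{x})>t \text{ and } |x_j|\le \tfrac54(p_j(1-p_j)+\log n) \text{ for all } j\}$ is convex and applies Theorem \ref{th:au20} to it directly. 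That assertion is false in general: convexity of $\cq$ on the box makes its \emph{sublevel} sets convex, so for $t>0$ the displayed set is a box with a convex hole around the origin (already in dimension one, $\{x : x^2>1/4\}\cap[-1,1]$ is a union of two disjoint intervals). You identify exactly this obstacle and repair it by applying Theorem \ref{th:au20} to the genuinely convex set $K_s=\{\cq\le s\}\cap\cp$ and passing to complements, which costs nothing since $|\p(X\in B^c)-\p(Y\in B^c)|=|\p(X\in B)-\p(Y\in B)|$. The price of the repair appears only in the upper-bound direction, as the extra Gaussian mass $\p(\bxtilde\in\cp^c)$; your lower bound comes out exactly as stated, because $K_{t+\rho}^c\supseteq\{\cq>t+\rho\}$ while $K_{t+\rho}^c\setminus(\{\cq>t+\rho\}\cap\cp)=\cp^c$ has conditional probability at most $2r/n$ under $\bx^\btheta$. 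So, strictly speaking, your upper inequality carries an additional super-polynomially small term beyond the stated $2r/n+\eps_n$; alternatively, writing $\cp\setminus K_s$ as the difference of the two convex sets $\cp$ and $K_s$ yields the stated form with $2\eps_n$ in place of $\eps_n$. Either variant is immaterial, since Theorem \ref{th:phi-div} uses the lemma only up to absolute constants, and any correct fix of the paper's own argument incurs the same negligible slack; what your route buys is validity at the crux of the proof, at the cost of constants negligibly worse than the ones printed in the lemma.
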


\begin{proof}[Proof of Lemma \ref{lem:gauss_approx}.]

We start with the proof of the lower bound.
Note that
\begin{align*}
	\p\left( \ct_\phi > t \cond \right)
	&
	\geq \p\left( \left\{\cq(\bx^\btheta) + \rr> t \right\} \cap E_1 \cond \right)
	\\&
	\geq \p\left( \left\{\cq(\bx^\btheta) > t + \frac{2\ls}{\phi''(1) n} + \frac{2\ls r (\log n)^4}{\phi''(1) n p_{\min}^3} \right\} \cap E_1 \cond \right).
\end{align*}
Direct calculation shows that, if $\phi'''(1) \neq 0$, then the Hessian of $\cq({\bf x})$ is 
positive definite on the set
\[
	\bigotimes\limits_{j=1}^r \left( -\frac{\phi''(1) \sqrt{n} p_j}{|\phi'''(1)|}, \frac{\phi''(1) 
	\sqrt{n} p_j}{|\phi'''(1)|} \right).
\]
Otherwise, the Hessian of $f(x)$ is positive definite everywhere.
Hence, if
\[
	\frac{5 |\phi'''(1)|}4 \left( p_j(1 - p_j) + \log n \right) \leq \phi''(1) \sqrt{n} p_j
\]
for all $j \in \{1, \dots, r\}$, then
\[
	\Big\{ {\bf x} \in \R^r : \cq({\bf x}) > t \text{ and } |x_j| \leq 5(p_j(1 - p_j) + 
	\log n) / 4 \text{ for all $j$} \Big\}
\]
is a convex Borel set.
We are going to apply Theorem \ref{th:au20} to the weighted sum
\[
	\bx^\btheta = \sum\limits_{i=1}^n \theta_i (\bfeta_i - \bp).
\]
Note that $(\bfeta_1 - \bp)^\top\bone = 0$ almost surely and that the covariance matrix $\Sigma$ has rank $(r-1)$.
This means that $\bfeta_1$ is supported on a $(r-1)$-dimensional subspace $\text{Im}(\Sigma)$. 
From now on, we restrict ourselves on this subspace.
Applying Theorem \ref{th:au20} to the weighted sum $\sum\limits_{i=1}^n \theta_i (\Sigma^\dag)^{1/2}(\bfeta_i - \bp)$, where $\Sigma^\dag$ is the Moore-Penrose pseudo-inverse of $\Sigma$, we obtain that
\begin{align*}
	&
	\sup\limits_{B \in \mathfrak B(\text{Im}(\Sigma))} \left| \p\left( 
	\sum\limits_{i=1}^n \theta_i (\Sigma^\dag)^{1/2}(\bfeta_i - \bp) \in B \,\Big\vert\, \btheta \right) - \p( \mathbf{V} \in 	B ) \right|
	\\&\notag
	\leq \frac{C_{r-1} \E ((\bfeta_1 - \bp)^\top \Sigma^\dag (\bfeta_1 - \bp))^2 \log^2(1 / \delta)}{n}
\end{align*}
with probability at least $1 - \delta$ over the external randomization.
Here $\mathbf{V} \sim \N(0, I_{r-1})$ and $\mathfrak B(\text{Im}(\Sigma))$ is the class of Borel convex sets in $\text{Im}(\Sigma)$.
Since $(\bfeta - \bp) \in \text{Im}(\Sigma)$ almost surely, it holds that $\Sigma^{1/2} (\Sigma^\dag)^{1/2} (\bfeta - \bp) = (\bfeta - \bp)$.
This yields that, with probability at least $1 - \delta$ over the external randomization,  
\begin{equation}
	\label{eq:weighted_multinomial_clt}
	\sup\limits_{B \in \mathfrak B(\text{Im}(\Sigma))} \left| \p\left( \bx^\btheta \in B \,\Big\vert\, \btheta \right) - \p( \bxtilde \in B ) \right|
	\leq \frac{C_{r-1} \E ((\bfeta_1 - \bp)^\top \Sigma^\dag (\bfeta_1 - \bp))^2 \log^2(1 / \delta)}{n},
\end{equation}
where $\bxtilde \sim \N(0, \Sigma)$ is a Gaussian random vector.
In its turn, \eqref{eq:weighted_multinomial_clt} implies
\begin{align*}
	\p\left( \ct_\phi > t \cond \right)
	&
	\geq \p\left( \left\{\cq(\bx^\btheta) > t + \frac{2\ls}{\phi''(1) n} + \frac{2\ls r (\log n)^4}{\phi''(1) n p_{\min}^3} \right\} \cap E_1 \cond \right)
	\\&
	\geq \p\left( \cq(\bxtilde) > t +\frac{2\ls}{\phi''(1) n} + \frac{2\ls r (\log n)^4}{\phi''(1) n p_{\min}^3} \text{ and } |\widetilde X_j| \leq 		5(p_j(1 - p_j) + 	\log n) / 4 \text{ for all $j$} \right)
	\\&\quad
	- \frac{C_{r-1} \E ((\bfeta_1 - \bp)^\top \Sigma^\dag (\bfeta_1 - \bp))^2 \log^2(1 / \delta)}{n}.
\end{align*}
Finally, Hoeffding's inequality and the union bound imply that, with probability at least $1 
- 2r/n$, 
\[
	|\widetilde X_j| \leq \sqrt{2 p_j(1 - p_j) \log n}
	< \frac54 \Big(p_j(1 - p_j) + \log n \Big)
\]
simultaneously for all $j \in \{1, \dots, r\}$. This yields
\begin{align*}
	&
	\p\left( \cq(\bxtilde) > t +\frac{2\ls}{\phi''(1) n}
	+ \frac{2\ls r (\log n)^4}{\phi''(1) n p_{\min}^3} \text{ and } |\widetilde X_j|
	\leq 5(p_j(1 - p_j) + \log n) / 4 \text{ for all $j$} \right)
	\\&
	\geq \p\left( \cq(\bxtilde) > t + \frac{2\ls}{\phi''(1) n} + \frac{2\ls r (\log n)^4}{\phi''(1) n p_{\min}^3} \right) - \frac{2r}n
\end{align*}
and
\begin{align*}
	\p\left( \ct_\phi > t \cond \right)
	&
	\geq \p\left( \cq(\bxtilde) > t + \frac{2\ls}{\phi''(1) n} + \frac{2\ls r (\log n)^4}{\phi''(1) n p_{\min}^3} \right) - \frac{2r}n
	\\&\quad
	- \frac{C_{r-1} \E ((\bfeta_1 - \bp)^\top \Sigma^\dag (\bfeta_1 - \bp))^2 \log^2(1 / \delta)}{n}.
\end{align*}
Similarly, we deduce
\begin{align*}
	\p\left( \ct_\phi > t \cond \right)
	&
	\leq \p\left( \left\{\cq(\bx^\btheta) > t - \frac{2\ls}{\phi''(1) n} - \frac{2\ls r (\log n)^4}{\phi''(1) n p_{\min}^3} \right\} \cap E_1 \cond \right) + \p(E_1 \,\vert\, \btheta)
	\\&
	\leq \p\left( \cq(\bxtilde) > t - \frac{2\ls}{\phi''(1) n} - \frac{2\ls r (\log n)^4}{\phi''(1) n p_{\min}^3} \right) + \frac{2r}n
	\\&\quad
	+ \frac{C_r \E ((\bfeta_1 - \bp)^\top \Sigma^\dag (\bfeta_1 - \bp))^2 \log^2(1 / \delta)}{n}.
\end{align*}
\end{proof}

\medskip

\noindent{\bf Step 4.}\quad
The next lemma provides an upper bound on the expectation $\E ((\bfeta_1 - \bp)^\top \Sigma^\dag (\bfeta_1 - \bp))^2$.
\begin{Lem}
	\label{lem:pseudo-inverse}
	It holds that
	\[
		\E ((\bfeta_1 - \bp)^\top \Sigma^\dag (\bfeta_1 - \bp))^2 \leq \sum\limits_{j=1}^r p_j^{-1} \leq \frac{r}{p_{\min}}.
	\]
\end{Lem}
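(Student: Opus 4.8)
The plan is to evaluate the quadratic form $(\bfeta_1 - \bp)^\top \Sigma^\dag (\bfeta_1 - \bp)$ exactly and only then take the expectation. The starting observation is that $\bfeta_1 \sim \Mult(1, \bp)$ is a one-hot vector: $\bfeta_1 = \be_k$ with probability $p_k$, $1 \leq k \leq r$, where $\be_k$ is the $k$-th standard basis vector. Since $\bone^\top \bfeta_1 = 1 = \bone^\top \bp$, the centered vector $\bfeta_1 - \bp$ is orthogonal to $\bone$ almost surely; as $\Sigma$ is symmetric with kernel $\text{span}(\bone)$, this means $\bfeta_1 - \bp \in \text{Im}(\Sigma) = \bone^\perp$, which is exactly the subspace on which $\Sigma^\dag$ is controlled.

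The crucial step I would establish is that, restricted to $\bone^\perp$, the pseudoinverse acts like $D^{-1}$ at the level of the quadratic form, namely
\[
	\bv^\top \Sigma^\dag \bv = \sum_{j=1}^r \frac{v_j^2}{p_j}
	\qquad \text{whenever } \bone^\top \bv = 0.
\]
To prove this I would use $\bp^\top D^{-1} = \bone^\top$ (since $(\bp^\top D^{-1})_j = p_j / p_j = 1$) to compute
\[
	\Sigma D^{-1} \bv = (D - \bp\bp^\top) D^{-1}\bv = \bv - \bp (\bone^\top \bv) = \bv
\]
for $\bv \perp \bone$. Decomposing $D^{-1}\bv = \bu + c\bone$ with $\bu \perp \bone$ and using $\Sigma \bone = \bzero$, this gives $\Sigma \bu = \bv$ with $\bu \in \bone^\perp = \text{Im}(\Sigma)$, so $\bu = \Sigma^\dag \bv$ by the defining property of the Moore–Penrose inverse. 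Because $\bv \perp \bone$, the term $c\bone$ drops out of the quadratic form, and $\bv^\top \Sigma^\dag \bv = \bv^\top D^{-1}\bv = \sum_j v_j^2 / p_j$.

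With this identity, I would evaluate the form conditionally on $\bfeta_1 = \be_k$, where a direct computation gives
\[
	(\be_k - \bp)^\top \Sigma^\dag (\be_k - \bp)
	= \frac{(1 - p_k)^2}{p_k} + \sum_{j \neq k} p_j
	= \frac{1 - p_k}{p_k}
	= \frac1{p_k} - 1.
\]
Squaring and averaging over $k$ with weights $p_k$ then yields
\[
	\E \big( (\bfeta_1 - \bp)^\top \Sigma^\dag (\bfeta_1 - \bp) \big)^2
	= \sum_{k=1}^r p_k \Big( \frac1{p_k} - 1 \Big)^2
	= \sum_{k=1}^r \frac1{p_k} - 2r + 1
	\leq \sum_{k=1}^r \frac1{p_k}
	\leq \frac{r}{p_{\min}},
\]
where the penultimate inequality uses $r \geq 1$ and the last uses $p_k \geq p_{\min}$.

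The only delicate point is the pseudoinverse identity in the second paragraph; everything else is elementary algebra. I expect the main obstacle to be justifying that the $\bone$-component of $D^{-1}\bv$ is harmless: one cannot simply replace $\Sigma^\dag$ by $D^{-1}$ (the matrices differ), and it is precisely the orthogonality $\bfeta_1 - \bp \perp \bone$ that makes the replacement valid inside the quadratic form. This is worth spelling out carefully rather than asserting $\Sigma^\dag = D^{-1}$.
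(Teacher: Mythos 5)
Your proof is correct, and at the top level it follows the same strategy as the paper: reduce the pseudoinverse quadratic form to the diagonal one, $\bv^\top \Sigma^\dag \bv = \bv^\top D^{-1}\bv$ for $\bv \perp \bone$, then average over the one-hot values of $\bfeta_1$, where $(\be_k - \bp)^\top D^{-1}(\be_k - \bp) = p_k^{-1} - 1$. Where you genuinely differ is in how that key identity is established. The paper verifies that $\Sigma^{1/2} D^{-1} \Sigma^{1/2}$ is symmetric and idempotent, hence the orthogonal projector onto $\bone^\perp$, and then inserts it between $(\Sigma^\dag)^{1/2}$ factors, using that $\Sigma^{1/2}(\Sigma^\dag)^{1/2}$ acts as the identity on $\text{Im}(\Sigma)$. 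You instead solve the linear system directly: from $\Sigma D^{-1}\bv = \bv$ for $\bv \perp \bone$, the decomposition $D^{-1}\bv = \bu + c\bone$ with $\bu \perp \bone$ gives $\Sigma \bu = \bv$ with $\bu \in \text{Im}(\Sigma)$, so the Moore--Penrose characterization yields $\Sigma^\dag \bv = D^{-1}\bv - c\bone$. This is somewhat more elementary (no matrix square roots needed) and slightly stronger, since it identifies the vector $\Sigma^\dag \bv$ itself rather than only the value of the quadratic form; the paper's projector argument, on the other hand, is the form reused verbatim in its Step 3, where the factorization through $(\Sigma^\dag)^{1/2}$ is already in play. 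A second minor difference: you compute the expectation exactly as $\sum_k p_k^{-1} - 2r + 1$ before bounding, whereas the paper bounds $p_j^{-1} - 1 < p_j^{-1}$ termwise and then sums against the weights $p_j$; your exact evaluation also sidesteps the small slip in the paper's final display, where those weights are inadvertently dropped (the inequality there is only valid with the $p_j$ weights from the preceding line, exactly as your computation has them).
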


\begin{proof}
	Show that $\Sigma^{1/2} D^{-1} \Sigma^{1/2}$ is a projector of onto the orthogonal complement of $\bone = (1, 	\dots, 1)^\top$.
	It is straightforward to check that $\Sigma \bone = \bzero$ and $\Sigma$ is matrix of rank $(r-1)$.
	Hence, the image of $\Sigma^{1/2} D^{-1} \Sigma^{1/2}$ is a $(r-1)$-dimensional subspace which is orthogonal to 	$\bone$.
	Moreover, $\Sigma^{1/2} D^{-1} \Sigma^{1/2}$ is symmetric and idempotent:
	\begin{align*}
		\Sigma^{1/2} D^{-1} \Sigma D^{-1} \Sigma^{1/2}
		&
		= \Sigma^{1/2} D^{-1} (D - \bp\bp^T) D^{-1} \Sigma^{1/2}
		\\&
		= \Sigma^{1/2} (D^{-1} - D^{-1}\bp\bp^T D^{-1}) \Sigma^{1/2}
		\\&
		= \Sigma^{1/2} (D^{-1} - \bone\bone^T) \Sigma^{1/2}
		\\&
		= \Sigma^{1/2} D^{-1} \Sigma^{1/2},
	\end{align*}
	and, hence, $\Sigma^{1/2} D^{-1} \Sigma^{1/2}$ is a projector of onto the orthogonal complement of $\bone = (1, \dots, 1)^\top$.
	This implies that, for any vector $\bu$, $\bu^\top \bone = 0$, one has
	\[
		\|\bu\|^2 = \bu^\top \Sigma^{1/2} D^{-1} \Sigma^{1/2} \bu.
	\]
	In particular,
	\begin{align*}
		(\bfeta_1 - p)^\top \Sigma^\dag (\bfeta_1 - p)
		&
		= (\bfeta_1 - p)^\top (\Sigma^\dag)^{1/2} \Sigma^{1/2} D^{-1} \Sigma^{1/2} (\Sigma^\dag)^{1/2} (\bfeta_1 - p)
		\\&
		= (\bfeta_1 - p)^\top D^{-1} (\bfeta_1 - p).
	\end{align*}
	Then
	\begin{align*}
		\E \left((\bfeta_1 - \bp)^\top \Sigma^\dag (\bfeta_1 - \bp)\right)^2
		&
		= \E \left((\bfeta_1 - \bp)^\top D^{-1} (\bfeta_1 - \bp)\right)^2
		\\&
		= \sum\limits_{j=1}^r \left((\be_j - \bp)^\top D^{-1} (\be_j - \bp) \right)^2 p_j,
	\end{align*}
	where $\be_1, \dots, \be_r$ is the standard basis in $\R^r$, that is, for any $j \in \{1, \dots, r\}$, the $j$-th 			component of $\be_j$ is equal to one and all the other components are equal to zero.
	Note that, for any $j \in \{1, \dots, r\}$, it holds that
	\begin{align*}
	    0
	    &
	    \leq (\be_j - \bp)^\top D^{-1} (\be_j - \bp)
		\\&
		= \be_j^\top D^{-1} \be_j - 2 \bp^\top D^{-1} \be_j + \bp^\top D^{-1} \bp
		\\&
		= p_j^{-1} - 2 \bone^\top \be_j + \bone^\top \bp
		\\&
		= p_j^{-1} - 1
		< p_j^{-1}.
	\end{align*}
	This yields
	\[
		\sum\limits_{j=1}^r \left((\be_j - \bp)^\top D^{-1} (\be_j - \bp) \right)^2
		\leq \sum\limits_{j=1}^r p_j^{-1}
		\leq \frac{r}{p_{\min}}.
	\]
\end{proof}

\medskip

\noindent{\bf Step 5.}\quad
Substituting the bound of Lemma \ref{lem:pseudo-inverse} into the results of Lemma \ref{lem:gauss_approx}, we obtain that, with probability at least $1 - \delta$ (over the external randomization)
\begin{equation}
	\label{eq:lem_pseudo-inverse_implication1}
	\p\left( \ct_\phi > t \cond \right)
	\geq \p\left( \cq(\bxtilde) > t + \frac{2\ls}{\phi''(1) n} + \frac{2\ls r (\log n)^4}{\phi''(1) n p_{\min}^3} \right) - \frac{2r}n - \frac{C_{r-1} r \log^2(1 / \delta)}{n p_{\min}} 
\end{equation}
and
\begin{equation}
	\label{eq:lem_pseudo-inverse_implication2}
	\p\left( \ct_\phi > t \cond \right)
	\leq \p\left( \cq(\bxtilde) > t - \frac{2\ls}{\phi''(1) n} - \frac{2\ls r (\log n)^4}{\phi''(1) n p_{\min}^3} \right) + \frac{2r}n + \frac{C_{r-1} r \log^2(1 / \delta)}{n p_{\min}},
\end{equation}
where $\bxtilde \sim \N(0, \Sigma)$.
Our next goal is to show that the distribution of $\cq(\bxtilde)$ is close to $\chi^2(r-1)$.
Introduce $\bz = D^{-1/2} \widetilde \bx \sim \N(0, D^{-1/2} \Sigma D^{-1/2})$ and $\bq = D^{-1/2} \bp$.
It is easy to check that $D^{-1/2} \Sigma D^{-1/2}$ is a projector of rank $(r - 1)$.
Indeed, it holds that
\begin{align*}
    \left(D^{-1/2} \Sigma D^{-1/2})\right)^2
    &
    = \left( I_r - \left(D^{-1/2}\bp\right) \left(D^{-1/2}\bp\right)^\top \right)^2
    = \left( I_r - \bq \bq^\top \right)^2
    \\&
    = I_r - \left(2 - \bq^\top \bq\right) \bq\bq^\top
    = I_r - \bq\bq^\top
    = D^{-1/2} \Sigma D^{-1/2},
\end{align*}
where we used the equality
\[
    \bq^\top \bq
    = \bp^\top D^{-1} \bp
    = \bp^\top \bone
    = 1.
\]
Consequently, the distribution of $\|\bz\|^2$ is exactly $\chi^2(r-1)$.
On the other hand, $\cq(\widetilde\bx)$ can be rewritten as
\[
	\cq(\widetilde\bx)
	= \sum\limits_{j=1}^r \frac{\widetilde X_j^2}{p_j}
	+ \frac{\phi'''(1)}{3\phi''(1)} \sum\limits_{j=1}^r \frac{\widetilde X_j^3}{\sqrt n p_j^2}
	= \sum\limits_{j=1}^r \z_j^2
	+ \frac{\phi'''(1)}{3\phi''(1)} \sum\limits_{j=1}^r \frac{\z_j^3}{\sqrt{n p_j}}.
\]

\begin{Lem}
	\label{lem:chi_squared_approx}
	Assume that
	\[
		16r^3 + 16r^2 \log n \leq np_{\min}.
	\]
	Then it holds that
	\[
		\sup\limits_{t \in \R} \left| \p\left( \|\bz\|^2 + \frac{\phi'''(1)}{3\phi''(1)} \sum\limits_{j=1}^r \frac{\z_j^3}{\sqrt{n p_j}} > t \right) - \p\left( Z > t \right) \right|
		\lesssim \left( \frac{\phi'''(1)}{\phi''(1)}\right)^2 \frac{r^{3/2} + (\log n)^{3/2}}{n p_{\min}},
	\]
	where $Z \sim \chi^2(r-1)$.
\end{Lem}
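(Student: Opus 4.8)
The plan is to exploit the rotational symmetry that $\bz$ enjoys on its support. Since $D^{-1/2}\Sigma D^{-1/2} = I_r - \bq\bq^\top$ is the orthogonal projector onto $\bq^\perp$ (as was just verified), $\bz$ is a standard Gaussian vector on the $(r-1)$-dimensional subspace $\bq^\perp$ and is rotationally invariant there. I would write $\bz = R\bu$ with $R = \|\bz\|$ and $\bu = \bz/\|\bz\|$ uniform on the unit sphere of $\bq^\perp$ and independent of $R$, so that $S := \|\bz\|^2 = R^2 \sim \chi^2(r-1)$. The decisive point is that, conditionally on the direction $\bu$, the cubic term $W := \tfrac{\phi'''(1)}{3\phi''(1)} \sum_{j=1}^r \z_j^3 / \sqrt{n p_j}$ is a deterministic function of $S$: since $\z_j^3 = R^3 u_j^3$,
\[
W = a\, S^{3/2}, \qquad a := \frac{\phi'''(1)}{3\phi''(1)} \cdot \frac{\Psi(\bu)}{\sqrt n}, \qquad \Psi(\bu) := \sum_{j=1}^r \frac{u_j^3}{\sqrt{p_j}},
\]
and the elementary bound $\sum_j |u_j|^3 \le 1$ gives $|a| \le |\phi'''(1)| / (3\phi''(1)\sqrt{n p_{\min}})$ almost surely.

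Writing $\psi_a(s) = s + a s^{3/2}$ and $\overline F(t) = \p(S > t)$, I may assume that the right-hand side of the claimed inequality is below a small absolute constant, since otherwise it exceeds the trivial Kolmogorov bound $1$; this forces $(\phi'''(1)/\phi''(1))^2 (r + \log n) \lesssim n p_{\min}$, which together with the hypothesis on $n$ makes $|a|\sqrt{s} < 2/3$ on the range $s \lesssim r + \log n$. After discarding the tail event $\{S > 2(r-1) + c\log n\}$ at cost $O(1/n)$, the map $\psi_a$ is increasing on the retained range, so, conditionally on $\bu$,
\[
\p\left( S + W > t \,\Big|\, \bu \right) - \overline F(t) = \overline F\left( \psi_a^{-1}(t) \right) - \overline F(t).
\]
I would then Taylor expand in $a$ about $a = 0$. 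Differentiating $\psi_a(\psi_a^{-1}(t)) = t$ yields $\partial_a \psi_a^{-1}(t)\big|_{a=0} = -t^{3/2}$, so the first-order coefficient equals $t^{3/2} f(t)$, with $f$ the $\chi^2(r-1)$ density. The crucial cancellation is that this first-order term vanishes after averaging over the direction: because $\bu \eqd -\bu$ and $\Psi$ is odd, $\E_{\bu}[a] = 0$. Hence
\[
\left| \p(S + W > t) - \p(S > t) \right| \le \frac12 \, \E_{\bu}\!\left[ a^2 \sup_{|a'| \le |a|} \left| \partial_{a'}^2 \overline F\!\left( \psi_{a'}^{-1}(t) \right) \right| \right] + O(1/n),
\]
which is genuinely of order $a^2$, i.e. of order $1/n$, rather than $|a|$.

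It remains to control the second-order term uniformly in $t$. Writing $\rho = \psi_a^{-1}(t)$, its second derivative expands into $f'(\rho)(\partial_a \rho)^2 + f(\rho)\, \partial_a^2 \rho$, and on the retained range $|a|\rho^{1/2} < 2/3$ gives $\partial_a \rho = O(\rho^{3/2})$ and $\partial_a^2 \rho = O(\rho^{5/2})$. The task therefore reduces to bounding $\sup_t f(t)\, t^{c}$ and $\sup_t |f'(t)|\, t^{c}$ for $c \le 3$; since the $\chi^2(r-1)$ density peaks near $t \asymp r$ with maximal value $\asymp r^{-1/2}$ and the retained range is $t \lesssim r + \log n$, these suprema are polynomial in $r$ and $\log n$ and produce the factor $r^{3/2} + (\log n)^{3/2}$. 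Combining this with $\E_{\bu}[a^2] = \tfrac1n (\phi'''(1)/3\phi''(1))^2 \E_{\bu}[\Psi(\bu)^2]$ and the moment estimate $\E_{\bu}[\Psi(\bu)^2] \lesssim (r p_{\min})^{-1}$ (coming from $\E_{\bu}[u_j^6] \asymp r^{-3}$ and $\sum_j p_j^{-1} \le r/p_{\min}$) gives the asserted bound. The hard part is exactly this last step: one must keep the sign cancellation from the symmetrization so that the estimate stays of order $a^2$ rather than $|a|$, while matching the polynomial growth of $\psi_a$ against the decay of the $\chi^2(r-1)$ density and its derivative, and carefully tracking the tail truncation that is responsible for the $(\log n)^{3/2}$ contribution.
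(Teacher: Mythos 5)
Your proposal is correct and takes essentially the same route as the paper: polar decomposition of $\bz$ into an independent radius and direction, rewriting the cubic term as $a S^{3/2}$ with $a$ a bounded odd functional of the direction, tail truncation, inversion of the monotone map $s \mapsto s + a s^{3/2}$, and above all the symmetrization cancellation $\E[a] = 0$ (from $\bu \eqd -\bu$), which is exactly the paper's key step ($\E \cs = 0$ in its notation) and is what reduces the error to second order, i.e. $O(1/(n p_{\min}))$ times polynomial factors. One small caveat: your justification of the weighted density bounds via ``the $\chi^2(r-1)$ density has maximal value $\asymp r^{-1/2}$'' fails for $r = 2$ (the $\chi^2(1)$ density and its derivative are unbounded near $0$), so you must bound the products $\sup_t f(t)\, t^{c}$ and $\sup_t |f'(t)|\, t^{c}$ directly (they are finite, since the polynomial weights absorb the singularity); the paper avoids this issue entirely by integrating the density of the radius $\rho = \|\bz\|$, which is bounded by an absolute constant for every $r \geq 2$.
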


\begin{proof}
	Let $\rho = \|\bz\|$, $\boldsymbol{\tau} = \bz / \|\bz\|$, and introduce
	\[
		\cs = \frac{\phi'''(1)}{3\phi''(1)} \sum\limits_{j=1}^r \frac{\tau_j^3 \sqrt{p_{\min}}}{\sqrt{p_j}}.
	\]
	Then it is enough to show that
	\[
		\sup\limits_{t \in \R} \left| \p\left( \rho^2 + \frac{\rho^3 \cs}{\sqrt{n p_{\min}}} > t \right) - \p\left( \rho^2 > t \right) \right|
		\lesssim \left(\frac{\phi'''(1)}{\phi''(1)}\right)^2 \frac{r^{3/2} + (\log n)^{3/2}}{n p_{\min}}.
	\]
	If $\phi'''(1) = 0$, then the statement is trivial.
	In the rest of the proof, we assume that $\phi'''(1) > 0$ without loss of generality.
	The case $\phi'''(1) < 0$ is absolutely similar.
	
	\medskip
	
	\noindent{\bfseries\itshape Case 1: $t \leq 0$.}\quad
	If $t \leq 0$, then
	\begin{align*}
		\p\left( \rho^2 + \frac{\rho^3 \cs}{\sqrt{n p_{\min}}} \leq t \right)
		&
		\leq \p\left( \rho^2 + \frac{\rho^3 \cs}{\sqrt{n p_{\min}}} \leq 0 \right)
		= \p\left( 1 + \frac{\rho \cs}{\sqrt{n p_{\min}}} \leq 0 \right)
		\\&
		\leq \p\left( \rho \leq -\frac{3\phi''(1)}{\phi'''(1)}\sqrt{n p_{\min}} \right)
		\lesssim \left( \frac{\phi'''(1)}{\phi''(1)}\right)^2 \frac{1}{n p_{\min}}.
	\end{align*}
	Here we took into account that, by the definition of $\cs$,
	\[
		|\cs|
		\leq \frac{\phi'''(1)}{3\phi''(1)} \sum\limits_{j=1}^r \frac{|\tau_j|^3 \sqrt{p_{\min}}}{\sqrt{p_j}}
		\leq \frac{\phi'''(1)}{3\phi''(1)} \sum\limits_{j=1}^r \tau_j^2
		\leq  				\frac{\phi'''(1)}{3\phi''(1)}.
	\]
	\noindent{\bfseries\itshape Case 2: $t > 8(r - 1) + 8\log n$.}
	In this case,
	\[
		\p\left( \rho^2 > t \right)
		\leq \frac{\E e^{\rho^2 / 4}}{e^{t / 4}}
		= 2^{(r-1) / 2} e^{-t / 4}
		\leq \frac1n
	\]
	and
	\begin{align*}
		\p\left( \rho^2 + \frac{\rho^3 \cs}{\sqrt{n p_{\min}}} > t \right)
		&
		\leq \p\left( \rho^2 > \frac t2 \right) + \p\left( \frac{\rho^3 \cs}{\sqrt{n p_{\min}}} > \frac t2 \right)
		\\&
		\leq \p\left( \rho^2 > \frac t2 \right) + \p\left( \rho^3 > \frac{3\phi''(1) t}{2\phi'''(1)} \sqrt{n p_{\min}} \right)
		\\&
		\leq 2^{(r - 1) / 2} e^{-t/8} + 2^{(r - 1) / 2} \exp\left\{-\frac14 \left(\frac{3\phi''(1) t}{2\phi'''(1)}\right)^{2/3} \left(n p_{\min}\right)^{1/3}\right\}
		\\&
		\leq \frac1n + 2^{(r - 1) / 2} \exp\left\{-\frac{r^{1/3}}4 \left(\frac{3\phi''(1) t}{2\phi'''(1)}\right)^{2/3} \left(\frac{n p_{\min}}r\right)^{1/3}\right\}
		\\&
		\lesssim \left( \frac{\phi'''(1)}{\phi''(1)}\right)^2 \frac r{n p_{\min}}.
	\end{align*}
	\noindent{\bfseries\itshape Case 3: $0 < t \leq 8(r - 1) + 8\log n$.}\quad
	It remains to bound
	\[
		\sup\limits_{0 < t \leq 8(r - 1) + 8\log n} \left| \p\left( \rho^2 + \frac{\rho^3 \cs }{\sqrt{n p_{\min}}} > t \right) - \p\left( \rho^2 > t \right) \right|.
	\]
	For any $t \in (0, 8(r - 1) + 8\log n)$, let a random variable $\sigma_t$ be a root of the equation
	\[
		t = \sigma_t^2 + \frac{\sigma_t^3 \cs}{\sqrt{n p_{\min}}}
	\]
	from the interval $(\sqrt{0.5t}, \sqrt{2t})$.
	Such a root exists, because
	\[
		\frac{t}2 + \frac{t\cs \sqrt{t}}{2 \sqrt{2np_{\min}}}
		\leq \frac{t}2 \left( 1 + \frac{ \sqrt{t}}{\sqrt{2np_{\min}}} \right)
		\leq \frac{t}2 \left( 1 + \frac{2 \sqrt{(r-1) + \log n}}{\sqrt{np_{\min}}} \right)
		\leq t
	\]
	and
	\[
		2t + \frac{2t\cs \sqrt{2t}}{\sqrt{np_{\min}}}
		\geq 2t \left( 1 - \frac{ \sqrt{2t}}{\sqrt{np_{\min}}} \right)
		\geq 2t \left( 1 - \frac{4\sqrt{(r-1) + \log n}}{\sqrt{np_{\min}}} \right)
		\geq t.
	\]
	Note that $\sigma_t$ is independent of $\rho$.
	Then
	\begin{align*}
		&
		\sup\limits_{0 < t \leq 8(r - 1) + 8\log n} \left| \p\left( \rho^2 + \frac{\rho^3 \cs }{\sqrt{n p_{\min}}} > t \right) - \p\left( \rho^2 > t \right) \right|
		\\&
		= \sup\limits_{0 < t \leq 8(r - 1) + 8\log n} \left| \E \p\left( \rho^2 + \frac{\rho^3 \cs}{\sqrt{n p_{\min}}} > t \,\Big\vert\, \cs \right) - \p\left( \rho^2 > t \right) \right|
		\\&
		= \sup\limits_{0 < t \leq 8(r - 1) + 8\log n} \left| \E \p\left( \rho^2 + \frac{\rho^3 \cs}{\sqrt{n p_{\min}}} > \sigma_t^2 + \frac{\sigma_t^3 \cs}{\sqrt{n p_{\min}}} \,\Big\vert\, \cs \right) - \E\p\left( \rho^2 > \sigma_t^2 + \frac{\sigma_t^3 \cs}{\sqrt{n p_{\min}}} \,\Big\vert\, \cs \right) \right|.
	\end{align*}
	We need to solve the inequality
	\begin{equation}
	    \label{eq:cubic_inequality}
	    \rho^2 + \frac{\rho^3 \cs}{\sqrt{n p_{\min}}} > \sigma_t^2 + \frac{\sigma_t^3 \cs}{\sqrt{n p_{\min}}},
	    \qquad \rho > 0,
	\end{equation}
	with respect to $\rho$.
	For this purpose, we use the representation
	\[
		\frac{\rho^3 \cs}{\sqrt{n p_{\min}}} + \rho^2 - \frac{\sigma_t^3 \cs}{\sqrt{n p_{\min}}} - \sigma_t^2
		= \frac{\cs}{\sqrt{n p_{\min}}} \left( \rho - \sigma_t \right) \left( \rho^2 + \rho\sigma_t + \sigma_t^2 + 	\frac{\sqrt{np_{\min}}}{\cs} (\rho + \sigma_t) \right),
	\]
	which helps us to find all the roots of the cubic polynomial in the left hand side.
	If $\cs > 0$, then the map $x \mapsto x^2 + x^3 \cs / \sqrt{n p_{\min}}$ is monotone, and \eqref{eq:cubic_inequality} is equivalent to $\rho > \sigma_t$. 
	Hence,
	\[
		\p\left( \rho^2 + \frac{\rho^3 \cs}{\sqrt{n p_{\min}}} > \sigma_t^2 + \frac{\sigma_t^3 \cs}{\sqrt{n p_{\min}}} \,\Big\vert\, \cs > 0\right)
		= \p\left(\rho > \sigma_t \,\Big\vert\, \cs > 0\right). 	
	\]
	Otherwise, if $\cs < 0$, then the solution of \eqref{eq:cubic_inequality} is
	\[
	    \sigma_t < \rho <  \frac12 \left( \frac{\sqrt{n p_{\min}}}{|\cs|} - \sigma_t \right) \left( 1 + \sqrt{1 + 
		\frac{4\sigma_t}{\frac{\sqrt{n p_{\min}}}{|\cs|} - \sigma_t}} \right).
	\]
	Therefore,
	\begin{align*}
		&
		\p\left( \rho^2 + \frac{\rho^3 \cs}{\sqrt{n p_{\min}}} > \sigma_t^2 + \frac{\sigma_t^3 \cs}{\sqrt{n p_{\min}}} \,\Big\vert\, \cs < 0\right)
		\\&
		= \p\left(\sigma_t < \rho <  \frac12 \left( \frac{\sqrt{n p_{\min}}}{|\cs|} - \sigma_t \right) \left( 1 + \sqrt{1 + 
		\frac{4\sigma_t}{\frac{\sqrt{n p_{\min}}}{|\cs|} - \sigma_t}} \right) \,\Big\vert\, \cs < 0 \right)
		\\&
		= \p\left( \rho > \sigma_t \,\big\vert\, \cs < 0 \right) + O\left( \exp\left\{- O\left( \left(\frac{\phi''(1)}{\phi'''(1)}\right)^2 n p_{\min} \right) \right\}\right)
		\\&
		= \p\left( \rho > \sigma_t \,\big\vert\, \cs < 0 \right) + O\left( \left( \frac{\phi'''(1)}{\phi''(1)}\right)^2 \frac 1{n p_{\min}} \right).
	\end{align*}
	Let $\ps(x)$ be the density of $\rho$.
	Then
	\begin{align*}
		&
		\left| \p\left( \rho^2 + \frac{\rho^3 \cs}{\sqrt{n p_{\min}}} > \sigma_t^2 + \frac{\sigma_t^3 \cs}{\sqrt{n p_{\min}}} \right) - \p\left( \rho^2 > \sigma_t^2 + \frac{\sigma_t^3 \cs}{\sqrt{n p_{\min}}} \right) \right|
		\\&
		= \left| \p\left( \rho > \sigma_t \right) - \p\left( \rho^2 > \sigma_t^2 + \frac{\sigma_t^3 \cs}{\sqrt{n p_{\min}}} \right) \right| + O\left( \left( \frac{\phi'''(1)}{\phi''(1)}\right)^2 \frac 1{n p_{\min}} \right)
		\\&
		= \left| \E \int\limits_{\sigma_t}^{\sigma_t \sqrt{1 + \frac{\sigma_t \cs}{\sqrt{n p_{\min}}}} } \ps(x) \dd x \right| + 
		O\left( \left( \frac{\phi'''(1)}{\phi''(1)}\right)^2 \frac 1{n p_{\min}} \right)
		\\&
		= \left|\int\limits_{\E \sigma_t}^{\E \sigma_t \sqrt{1 + \frac{\sigma_t \cs}{\sqrt{n p_{\min}}}}} \ps(x) \dd x \right| + 
		O\left( \left( \frac{\phi'''(1)}{\phi''(1)}\right)^2 \frac 1{n p_{\min}} \right).
	\end{align*}
	Finally, since
	\[
		|\cs|
		\leq \frac{\phi'''(1)}{3\phi''(1)} \sum\limits_{j=1}^r \frac{|\tau_j|^3 \sqrt{p_{\min}}}{\sqrt{r p_j}}
		\leq \frac{\phi'''(1)}{3\phi''(1)} \sum\limits_{j=1}^r \tau_j^2
		\leq \frac{\phi'''(1)}{3\phi''(1)},
	\]
	and
	\[
	    |\sigma_t| \leq \sqrt{2t} \lesssim \sqrt{r} + \sqrt{\log n} \quad \text{almost surely},
	\]
	it holds that
	\[
		\E \sigma_t \sqrt{1 + \frac{\sigma_t \cs}{\sqrt{n p_{\min}}}}
		= \E \sigma_t + \E \frac{\sigma_t^2 \cs}{2\sqrt{n p_{\min}}} + O\left( \left( \frac{\phi'''(1)}{\phi''(1)}\right)^2 \frac{r^{3/2} + (\log n)^{3/2}}{n p_{\min}} \right)
	\]
	and
	\[
		\E \frac{\sigma_t^2 \cs}{2\sqrt{n p_{\min}}}
		= \E \frac{t \cs}{2\sqrt{n p_{\min}}} - \E \frac{\sigma_t^3 \cs^2}{2 n p_{\min}}
		= O\left( \left( \frac{\phi'''(1)}{\phi''(1)}\right)^2 \frac{r^{3/2} + (\log n)^{3/2}}{n p_{\min}} \right).
	\]
	This and the fact that $\max\limits_{x > 0} \ps(x) \lesssim 1$ yield the desired result.
	
\end{proof}

\medskip

\noindent{\bf Step 6.}\quad
Let $Z \sim \chi^2(r-1)$.
Lemma \ref{lem:chi_squared_approx} and the inequalitites \eqref{eq:lem_pseudo-inverse_implication1} and \eqref{eq:lem_pseudo-inverse_implication2} yield that, for any $t \in \R$, 
\begin{align*}
	\p\left( \ct_\phi > t \cond \right)
	&
	\geq \p\left( Z > t + \frac{2\ls}{\phi''(1) n} + \frac{2\ls r (\log n)^4}{\phi''(1) n p_{\min}^3} \right)
	\\&\quad
	- \frac{2r}n - 
	\left( \frac{\phi'''(1)}{\phi''(1)}\right)^2 \frac{r^{3/2} + (\log n)^{3/2}}{n p_{\min}}
	- \frac{C_{r-1} r \log^2(1 / \delta)}{n p_{\min}},
\end{align*}
and
\begin{align*}
	\p\left( \ct_\phi > t \cond \right)
	&
	\leq \p\left( Z > t - \frac{2\ls}{\phi''(1) n} - \frac{2\ls r (\log n)^4}{\phi''(1) n p_{\min}^3} \right)
	\\&\quad
	+ \frac{2r}{n} + \left( \frac{\phi'''(1)}{\phi''(1)}\right)^2 \frac{r^{3/2} + (\log n)^{3/2}}{n p_{\min}}
	+ \frac{C_{r-1} r \log^2(1 / \delta)}{n p_{\min}}.
\end{align*}
To finish the proof of Theorem \ref{th:phi-div}, we use the anti-concentration property of 
a chi-squared random variable.
According to \cite[Theorem 2.7]{gnsu19}, it holds that
\[
	\left| \p\left( Z > t + \frac{2\ls}{\phi''(1) n} + \frac{2\ls r (\log n)^4}{\phi''(1) n p_{\min}^3} \right) - \p\left( Z > t \right) 
	\right|
	\lesssim \frac{1}{\sqrt{r}} \left( \frac{\ls}{\phi''(1) n} + \frac{\ls r (\log n)^4}{\phi''(1) n p_{\min}^3} \right)
\]
and
\[
	\left| \p\left( Z > t - \frac{2\ls}{\phi''(1) n} - \frac{2\ls r (\log n)^4}{\phi''(1) n p_{\min}^3} \right) - \p\left( Z > t \right) 
	\right|
	\lesssim \frac{1}{\sqrt{r}} \left( \frac{\ls}{\phi''(1) n} + \frac{\ls r (\log n)^4}{\phi''(1) n p_{\min}^3} \right).
\]
Hence,
\begin{align*}
	\sup\limits_{t \in \R} \left| \p\left( \ct_\phi > t \cond \right)  - \p\left( Z > t 
	\right) \right|
	&
	\lesssim \left(\frac{\phi'''(1)}{\phi''(1)}\right)^2 \frac{r^{3/2} + (\log n)^{3/2}}{n p_{\min}}
	\\&\quad
	+ \frac{C_{r-1} r \log^2(1 / \delta)}{n p_{\min}}
	+ \frac{\ls \sqrt{r} (\log n)^4}{\phi''(1) n p_{\min}^3}.
\end{align*}

\appendix

\section{Proof of Proposition \ref{prop:rankings}}
\label{app:prop_proof}

The proof of the proposition relies on \citep[Chapter 7, Theorem 1]{esseen45} and \citep[Theorem 1.2]{gz14}.
Let
\[
	\Sigma_\J = \sigma_\J^2 I_r - \frac{\sigma_\J^2}r \bone \bone^\top.
\]
Recall that, according to \citep[p. 314]{sen68},
\[
	\E \bV_1^\J = \overline\J \bone
	\quad\text{and}\quad
	\text{Var}(\bV_1^\J) = \Sigma_\J. 
\]
Repeating the same argument as in the proof of Theorem \ref{th:rankings}, we obtain that
\[
	\ct_\J = \left\| \sum\limits_{i=1}^n \left( \Sigma_\J^\dag \right)^{1/2} \left( \bV_i^\J - \E \bV_i^\J \right) \right\|^2,
\]
where the vectors
\[
	\bW_i^\J = \left( \Sigma_\J^\dag \right)^{1/2} \left( \bV_i^\J - \E \bV_i^\J \right), \quad 1 \leq i \leq n,
\]
are i.i.d. random vectors supported on a proper subspace of $\R^r$ of dimension $r-1$, which is orthogonal to $\bone = (1, \dots, 1)^\top$. Choose an orthonormal basis $\bu_1, \dots, \bu_{r-1}$ in this subspace and put
\[
	U = \left( \bu_1, \dots, \bu_{r-1} \right) \in \R^{r \times (r-1)}.
\]
It is clear that $U^\top$ performs an orthogonal embedding of $\bW_1^\J, \dots, \bW_n^\J$ into $\R^{r-1}$. This yields that
\[
	\ct_\J
	= \left\| \sum\limits_{i=1}^n \bW_i^\J \right\|^2
	= \left\| \sum\limits_{i=1}^n U^\top \bW_i^\J \right\|^2. 
\]
Moreover,
\[
	\text{Var}\left(U^\top \bW_1^\J\right) = U^\top \text{Var}\left(\bW_1^\J\right) U = I_{r-1}.
\]
Next, the cases $2 \leq r \leq 5$ and $r \geq 6$ are considered separately.

\medskip

\noindent{\it Case 1: $2 \leq r \leq 5$.}\quad
In this case, we use \citep[Chapter 7, Theorem 1]{esseen45}, which yields that
\begin{equation}
	\label{eq:esseen45}
	\sup\limits_{t \in \R} \left| \p\left( \ct_\J > t \right) - \p( Z > t ) \right|
	\leq \frac{c_r}{n^{1 - 1 / r}} \left( \sum\limits_{j=1}^{r-1} \E \left(\bu_j^\top \bW_1^\J \right)^4 \right)^{3/2},
\end{equation}
where $Z \sim \chi^2(r-1)$ and $c_r$ is a constant depending on $r$ only. Let us elaborate on
\[
	\sum\limits_{j=1}^{r-1} \E \left(\bu_j^\top \bW_1^\J \right)^4.
\]
Note that $\Sigma_\J$ is such that $\Sigma_\J \bone = 0$ and $\Sigma_\J \bu = \sigma_\J^2 \bu$ for any $\bu$ which is orthogonal to $\bone$. Then
\[
	\left(\Sigma_\J^\dag \right)^{1/2} \bone = 0
	\quad\text{and}\quad
	\left(\Sigma_\J^\dag \right)^{1/2} \bu = \frac1{\sigma_\J} \bu \quad
	\text{for all $\bu$ which is orthogonal to $\bone$}.
\]
Thus, for any $j \in \{1, \dots, r-1\}$, it holds that
\[
	\E \left(\bu_j^\top \bW_1^\J \right)^4
	= \E \left(\bu_j^\top \left(\Sigma_\J^\dag \right)^{1/2} \left( \bV_1^\J - \E \bV_1^\J \right) \right)^4
	= \frac1{\sigma_\J^4} \E \left(\bu_j^\top \left( \bV_1^\J - \E \bV_1^\J \right) \right)^4.
\]
Since $|\J(k) - \overline\J | \leq B$ for all $k \in \{1, \dots, r\}$, we have
\begin{align*}
	\E \left(\bu_j^\top \left( \bV_1^\J - \E \bV_1^\J \right) \right)^4
	&
	\leq \E \|\bu_j\|^2 \left\| \bV_1^\J - \E \bV_1^\J \right\|^2 \left(\bu_j^\top \left( \bV_1^\J - \E \bV_1^\J \right) \right)^2
	\\&
	\leq B^2 r \E \left(\bu_j^\top \left( \bV_1^\J - \E \bV_1^\J \right) \right)^2
	\\&
	= B^2 r \bu_j^\top \Sigma_\J \bu_j
	= B^2 \sigma_\J^2 r.
\end{align*}
Hence,
\begin{equation}
	\label{eq:esseen45_moment}
	\sum\limits_{j=1}^{r-1} \E \left(\bu_j^\top \bW_1^\J \right)^4
	\leq \frac{B^2 r^2}{\sigma_\J^2}.
\end{equation}

\medskip

\noindent{\it Case 2: $r \geq 6$.}\quad
Applying \citep[Theorem 1.2]{gz14} to the vectors $U^\top \bW_1^\J, \dots, U^\top \bW_n^\J$, we obtain that
\begin{equation}
	\label{eq:gz14}
	\sup\limits_{t \in \R} \left| \p\left( \ct_\J > t \right) - \p( Z > t ) \right|
	\leq \frac{c_r \E \| U^\top \bW_1^\J \|^4}{n},
\end{equation}
where $Z \sim \chi^2(r-1)$ and the constant $c_r$ depends on $r$ only. An upper bound on $\E \| U^\top \bW_1^\J \|^4$ follows from Lemma \ref{lem:rankings_moment}:
\begin{equation}
	\label{eq:gz14_moment}
	\E \| U^\top \bW_1^\J \|^4
	= \E \| \bW_1^\J \|^4
	= \E \left( \left( \bV_1^\J - \E\bV_1^\J \right)^\top \Sigma_\J^\dag \left( \bV_1^\J - \E\bV_1^\J \right) \right)^2
	\leq \frac{r^2 B^2}{\sigma_\J^2}.
\end{equation}
Hence, taking \eqref{eq:esseen45}, \eqref{eq:esseen45_moment}, \eqref{eq:gz14}, and \eqref{eq:gz14_moment} together, we conclude that
\[
	\sup\limits_{t \in \R} \left| \p\left( \ct_\J > t \right) - \p( Z > t ) \right| =
	\begin{cases}
		O\left( B^3 n^{ -1 + 1/r} / \sigma_\J^3 \right), \quad \text{if $2 \leq r \leq 5$},\\
		O_r\left( B^2 n^{-1} / \sigma_\J^2 \right), \quad \text{if $r \geq 6$},
	\end{cases}
\]
where $Z \sim \chi^2(r-1)$.

\section{Proof of Proposition \ref{prop:power}}
\label{app:prop_power_proof}

Note that, by the definition of $\phi_\lambda$, $\phi_\lambda(1) = \phi_\lambda'(1) = 0$ and $\phi_\lambda''(1) = 1$ for any $\lambda \in \R$. It remains to specify the constants $\ls$ and $\Delta$.

\medskip

\noindent{\bfseries\itshape Proof of Proposition \ref{prop:power}a.}\quad
Note that
\[
    \phi_1'''(u) \equiv 0
    \quad \text{and} \quad
    \phi_2'''(u) \equiv 1.
\]
Obviously, these functions satisfy Assumption \ref{as:phi} with $\ls = 0$ and any $\Delta > 0$.

\medskip

\noindent{\bfseries\itshape Proof of Proposition \ref{prop:power}b.}\quad
Direct calculations show that
\[
    \frac{\dd^4 \phi_\lambda(u)}{\dd u^4}
    = (\lambda - 1)(\lambda - 2) u^{\lambda - 3},
    \quad \text{for all $\lambda \in \R$.}
\]
Consider the case $\lambda \geq 3$.
Take $\Delta = 1 / (\lambda - 2)$. Then, for any $u \in [1 - \Delta, 1 + \Delta]$, it holds that
\begin{align*}
    \left| \frac{\dd^4 \phi_\lambda(u)}{\dd u^4} \right|
    &
    \leq (\lambda - 1)(\lambda - 2) (1 + \Delta)^{\lambda - 3}
    \leq (\lambda - 1)(\lambda - 2) e^{\Delta(\lambda - 3)}
    \\&
    = (\lambda - 1)(\lambda - 2) e^{(\lambda - 3) / (\lambda - 2)}
    < e (\lambda - 1)(\lambda - 2),
\end{align*}
where the last inequality is due to the fact that $0 \leq (\lambda - 3) / (\lambda - 2) < 1$ for all $\lambda \geq 3$.

\medskip

\noindent{\bfseries\itshape Proof of Proposition \ref{prop:power}c.}\quad
Recall that, for any $\lambda \in \R$,
\[
    \frac{\dd^4 \phi_\lambda(u)}{\dd u^4}
    = (\lambda - 1)(\lambda - 2) u^{\lambda - 3}.
\]
If $\lambda < 3$, take $\Delta = 1 / (5 - \lambda) \in (0, 1/2]$ and note that, for any $u \in [1 - \Delta, 1 + \Delta]$,
\[
    \left| \frac{\dd^4 \phi_\lambda(u)}{\dd u^4} \right|
    \leq \left| (\lambda - 1)(\lambda - 2) \right| (1 - \Delta)^{\lambda - 3}.
\]
Using Taylor's expansion with the integral remainder term, we obtain that
\begin{align*}
    -\log(1 - \Delta)
    &
    = \Delta + \frac{\Delta^2}2 \int\limits_0^1 \left(1 - s\Delta\right)^{-2} \dd s
    \\&
    \leq \Delta \left( 1 + \frac{\Delta}{2 (1 - \Delta)^2} \right).
\end{align*}
Substituting $\Delta = 1 / (5 - \lambda)$, we get
\[
    \frac{\Delta}{(1 - \Delta)^2}
    = \frac{5 - \lambda}{4 - \lambda} \cdot \frac{1}{4 - \lambda}
    < 2 \cdot 1 = 2.
\]
Here we used the inequalities
\[
    \frac{5 - \lambda}{4 - \lambda} < 2
    \quad \text{and} \quad
    \frac{1}{4 - \lambda} < 1,
    \quad \text{for all $\lambda < 3$}.
\]
Hence, $1 / (1 - \Delta) \leq e^{2\Delta}$ and
\begin{align*}
    \frac{\dd^4 \phi_\lambda(u)}{\dd u^4}
    &
    \leq \left|(\lambda - 1)(\lambda - 2)\right| e^{2 \Delta (3 - \lambda)}
    \leq \left|(\lambda - 1)(\lambda - 2)\right| e^{2 \Delta (3 - \lambda)}
    \\&
    = \left|(\lambda - 1)(\lambda - 2)\right| e^{2 (3 - \lambda) / (5 - \lambda)}
    < e^2 \left|(\lambda - 1)(\lambda - 2)\right|,
\end{align*}
where the last inequality is due to the fact that $0 \leq (3 - \lambda) / (5 - \lambda) < 1$ for all $\lambda < 3$. The proof of Proposition \ref{prop:power} is finished.

\bigskip

\begin{acks}[Acknowledgments]
The article was prepared within the framework of the HSE University Basic Research Program. The first author is a Young Russian Mathematics award winner and would like to thank its sponsors and jury. The second author carried out the research within the scope of the Moscow Center for Fundamental and Applied Mathematics at Moscow State University. The authors are grateful to the associate editor and two anonymous referees for the careful reading of the paper and for their valuable and constructive remarks that improved the quality of this work.
\end{acks}

\bibliographystyle{imsart-number}
\bibliography{references}

\end{document}